\newtheorem{theorem}{Theorem}
\newtheorem{lemma}{Lemma}
\newdefinition{definition}{Definition}
\newdefinition{remark}{Remark}
\newdefinition{question}{Question}
\begin{document}
\begin{frontmatter}
\title{Selective but not Ramsey}

\author{Timothy Trujillo}
\address{University of Denver, Department of Mathematics, 2360 S Gaylord St, Denver, CO 80208, USA}
\ead{Timothy.Trujillo@du.edu}

\begin{abstract}
We give a partial answer to the following question of Dobrinen: For a given topological Ramsey space $\mathcal{R}$, are the notions of selective for $\mathcal{R}$ and Ramsey for $\mathcal{R}$ equivalent? Every topological Ramsey space $\mathcal{R}$ has an associated notion of Ramsey ultrafilter for $\mathcal{R}$ and selective ultrafilter for $\mathcal{R}$ (see \cite{MijaresSelective}).   If $\mathcal{R}$ is taken to be the Ellentuck space then the two concepts reduce to the familiar notions of Ramsey and selective ultrafilters on $\omega$; so by a well-known result of Kunen the two are equivalent. We give the first example of  an ultrafilter on a topological Ramsey space that is selective but not Ramsey for the space, and in fact a countable collection of such examples.

For each positive integer $n$ we show that for the topological Ramsey space $\mathcal{R}_{n}$ from \cite{Ramsey-Class2}, the notions of selective for $\mathcal{R}_{n}$ and Ramsey for $\mathcal{R}_{n}$ are not equivalent. In particular, we prove that forcing with a closely related space using almost-reduction, adjoins an ultrafilter that is selective but not Ramsey for $\mathcal{R}_{n}$.
Moreover, we introduce a notion of finite product among members of the family $\{\mathcal{R}_{n}: n<\omega\}$. We show that forcing with closely related product spaces using almost-reduction, adjoins ultrafilters that are selective but not Ramsey for these product topological Ramsey spaces. 
\end{abstract}

\begin{keyword}
Ramsey space, selective ultrafilter \MSC[2010] 05D10 \sep 03E02 \sep 54D80 \sep 03E05
\end{keyword}

\end{frontmatter}

\section{Introduction}
\label{section1}
This paper is concerned with giving examples of topological Ramsey spaces $\mathcal{R}$ and ultrafilters that are selective for $\mathcal{R}$ but not Ramsey for $\mathcal{R}$. The first result of topological Ramsey theory was the infinite dimensional extension of the Ramsey theorem known as the Ellentuck theorem (see \cite{Ellentuck}). Ellentuck proved this theorem in order to give a proof of Silver's theorem stating that analytic sets have the Ramsey property. In order to state the Ellentuck theorem it is necessary to introduce the Ellentuck space.

We denote the infinite subsets of $\omega$ by $[\omega]^{\omega}$ and the finite subsets of $\omega$ by $[\omega]^{<\omega}$. If $B\in[\omega]^{\omega}$ and $\{b_{0},b_{1},b_{2}, \dots\}$ is its increasing enumeration, then for each $i<\omega$, we let $r_{i}(B)$ denote the set $\{b_{0},b_{1},b_{2},\dots, b_{i-1}\}$ and call it the $i^{th}$ approximation of $B$. The \emph{Ellentuck space} is the set $[\omega]^{\omega}$ of all infinite subsets of $\omega$ with the topology generated by the basic open sets, 
\begin{equation}
[a,B]= \{ A \in [\omega]^{\omega}:   A \subseteq B \ \& \ (\exists i)  r_{i}(B) =a \}
\end{equation} where $a\in[\omega]^{<\omega}$ and $B\in [\omega]^{\omega}$.

 Recall that a subset of a topological space is {\it nowhere dense} if its closure has empty interior and {\it meager} if it is the countable union of nowhere dense sets. A subset ${\mathcal X}$ of a topological space has the {\it Baire property} if and only if ${\mathcal X} = {\mathcal O} \cap {\mathcal M}$ for some open set ${\mathcal O}$ and some meager set ${\mathcal M}$.

 A subset ${\mathcal X}$ of the Ellentuck space is {\it Ramsey} if for every $\emptyset \not = [a,A]$, there is a $B\in[a,A]$ such that $[a,B] \subseteq {\mathcal X}$ or $[a,B] \cap {\mathcal X} = \emptyset$. The next theorem is the infinite-dimensional version of the Ramsey theorem.
\begin{theorem}[Ellentuck Theorem, \cite{Ellentuck}]
\label{ET}
Every subset of the Ellentuck space with the Baire property is Ramsey. 
\end{theorem}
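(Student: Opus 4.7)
The plan is to establish three combinatorial facts and combine them: first, that open subsets of the Ellentuck space are Ramsey; second, that meager sets are \emph{Ramsey null}, meaning that for every nonempty $[a,A]$ there is $B\in[a,A]$ with $[a,B]$ disjoint from the set; and third, that a set with the Baire property differs from an open set by a meager set, so the two ingredients can be glued together.

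The combinatorial engine is a Galvin-type lemma. For $\mathcal{X}\subseteq[\omega]^{\omega}$ and $\emptyset\neq[a,A]$, call $B\in[a,A]$ \emph{accepting} if $[a,B]\subseteq\mathcal{X}$ and \emph{rejecting} if no $C\in[a,B]$ accepts. I would first show by a fusion argument that some $B\in[a,A]$ either accepts or rejects $a$; then, by iterating this decision over all finite extensions of the form $a\cup s$ with $s\subseteq B$ and interleaving a standard diagonalization, I would produce a single $B\in[a,A]$ that simultaneously decides every such extension. This fusion, which must be carried out while keeping all intermediate sets inside $[a,A]$ and nesting the decisions correctly, is the step I expect to be the main obstacle.

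With the Galvin lemma in hand, nowhere dense implies Ramsey null: if $\mathcal{X}$ is nowhere dense but no $B\in[a,A]$ rejects, then by the dichotomy every $B$ accepts some extension, producing a basic open subset inside $\overline{\mathcal{X}}$ and contradicting empty interior of the closure. Ramsey null sets form a $\sigma$-ideal by a second fusion: given $\mathcal{X}_{n}$ Ramsey null, one builds $A\supseteq B_{0}\supseteq B_{1}\supseteq\dots$ together with finite approximations so that $[a\cup s,B_{n}]\cap\mathcal{X}_{n}=\emptyset$ for every $s$ approximated by stage $n$, and diagonalizes to a single $B$ with $[a,B]\cap\bigcup_{n}\mathcal{X}_{n}=\emptyset$; hence meager implies Ramsey null. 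For open $\mathcal{X}$, the Galvin lemma directly gives $B\in[a,A]$ with $[a,B]\subseteq\mathcal{X}$ or $B$ rejecting $a$, and in the rejecting case openness of $\mathcal{X}$ forces $[a,B]\cap\mathcal{X}=\emptyset$.

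Finally, for $\mathcal{X}$ with the Baire property write $\mathcal{X}\triangle\mathcal{O}=\mathcal{M}$ with $\mathcal{O}$ open and $\mathcal{M}$ meager. Given $[a,A]\neq\emptyset$, apply the Ramsey property of $\mathcal{O}$ to obtain $B_{0}\in[a,A]$ homogeneous for $\mathcal{O}$, then apply Ramsey nullity of $\mathcal{M}$ on $[a,B_{0}]$ to obtain $B\in[a,B_{0}]$ with $[a,B]\cap\mathcal{M}=\emptyset$. Since $\mathcal{X}$ and $\mathcal{O}$ coincide off $\mathcal{M}$, the set $[a,B]$ is homogeneous for $\mathcal{X}$, completing the proof.
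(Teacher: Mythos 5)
The paper does not prove this theorem; it is quoted from Ellentuck's original article, so there is no in-paper argument to compare against. Measured against the classical proof in the literature, your outline is the standard one: combinatorial forcing (accept/reject) plus fusion to get a single $B$ deciding all extensions $a\cup s$ (Galvin's lemma), from which one gets that Ellentuck-open sets are Ramsey; a second fusion showing the Ramsey null sets form a $\sigma$-ideal, hence meager implies Ramsey null; and the gluing step via $\mathcal{X}\,\triangle\,\mathcal{O}=\mathcal{M}$. The final paragraph is exactly right, and you correctly identify the fusion in Galvin's lemma as the place where all the real work lives (including the sub-lemma that a $B$ rejecting $a$ and deciding all one-point extensions must reject all but finitely many of them, which is what makes the "reject everything" alternative attainable).

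One local step as written does not quite go through: for a nowhere dense $\mathcal{X}$, knowing that some $B$ rejects $a$ and all its extensions (with acceptance defined by $[b,C]\subseteq\overline{\mathcal{X}}$) does not by itself yield $[a,B]\cap\overline{\mathcal{X}}=\emptyset$, because membership in a closed set gives no basic neighborhood to contradict rejection; that inference is only valid when the target set is open. The repair is already contained in your own ingredients: apply the Ramsey property of the open set $\mathcal{R}\setminus\overline{\mathcal{X}}$ on $[a,A]$; the alternative $[a,B]\subseteq\overline{\mathcal{X}}$ contradicts empty interior of the closure, so $[a,B]\cap\overline{\mathcal{X}}=\emptyset$. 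With that rerouting, the outline is the correct classical proof, modulo carrying out the two fusions in detail.
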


Topological Ramsey spaces are spaces that have enough structure in common with the Ellentuck space that an abstract version of the Ellentuck theorem can be stated and proved. The Ellentuck space leads naturally to the notion of a selective ultrafilter on $\omega$.

\begin{definition}
Let $\mathcal{U}$ be a nonprincipal ultrafilter on $\omega$. If $i<\omega$ and $A$ is an infinite subset of $\omega$, \emph{i.e.} in the Ellentuck space, then we let
\begin{equation} A/i= A\setminus r_{i}(A).
\end{equation}
 $\mathcal{U}$ is \emph{selective}, if for each decreasing sequence $A_{0} \supseteq A_{1} \supseteq \dots $ of members of ${\mathcal U}$ there exists $X=\{x_{0}, x_{1}, \dots \}\in{\mathcal U}$ enumerated in increasing order such that for all $i<\omega$, 
\begin{equation}
A/i \subseteq A_{i}.
\end{equation}
\end{definition}  The next theorem, due to Kunen, characterizes selective ultrafilters as those which are minimal with respect to the Rudin-Keisler ordering.  
\begin{theorem}[\cite{Booth}]
Let $\mathcal{U}$ be an ultrafilter on $\omega$. The following conditions are equivalent:
\begin{enumerate}
\item $\mathcal{U}$ is selective.
\item For each partition of the two-element subsets of $\omega$ into two parts, there is a set $X\in\mathcal{U}$ all of whose two-element subsets lie in one part of the partition.
\item Every function on $\omega$ is constant or one-to-one on some set in $\mathcal{U}$.
\end{enumerate}
\end{theorem}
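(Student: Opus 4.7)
The plan is to prove the cycle $(1) \Rightarrow (2) \Rightarrow (3) \Rightarrow (1)$. The two middle implications are the quick ones, so I would dispatch them first and leave $(1) \Rightarrow (2)$ as the substantive step.

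For $(2) \Rightarrow (3)$, given $f : \omega \to \omega$, partition $[\omega]^2$ by placing $\{n,m\}$ in part $0$ when $f(n) = f(m)$ and in part $1$ otherwise; a homogeneous $X \in \mathcal{U}$ supplied by (2) is then either constant for $f$ (color $0$) or one-to-one for $f$ (color $1$). For $(3) \Rightarrow (1)$, given a decreasing $A_0 \supseteq A_1 \supseteq \cdots$ in $\mathcal{U}$, first dispose of the trivial case $\bigcap_n A_n \in \mathcal{U}$; otherwise define $f(n) = \max\{k : n \in A_k\}$. A constant value $k_0$ of $f$ on some $X \in \mathcal{U}$ would force $X \cap A_{k_0+1} = \emptyset$, contradicting $A_{k_0+1} \in \mathcal{U}$, so $f$ must be one-to-one on some $X \in \mathcal{U}$. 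A short pruning (intersecting with $A_0$ and discarding a suitable initial segment) then arranges that the $i$-th element of the trimmed set lies in $A_i$, yielding the selector demanded by (1).

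For $(1) \Rightarrow (2)$, given $c : [\omega]^2 \to 2$, I would invoke the ultrafilter property twice: once to pick, for each $n$, a color $\epsilon(n) \in \{0,1\}$ with $B_n := \{m : c(\{n,m\}) = \epsilon(n)\} \in \mathcal{U}$, and once to stabilize $\epsilon$ to a constant $i_0$ on some $Y \in \mathcal{U}$. I would then construct a decreasing sequence $(A_k)$ in $\mathcal{U}$ from $Y$ and the sets $B_y$ for $y \in Y$, engineered so that any selector produced by (1) is forced to be $c$-monochromatic in color $i_0$; applying selectivity and thinning $X$ further if necessary produces the homogeneous set.

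The main obstacle is this last construction: selectivity only asserts $x_i \in A_i$, whereas homogeneity requires control over every pair $(x_j, x_i)$ with $j < i$, i.e.\ $x_i \in B_{x_j}$. A naive choice such as $A_k = Y \cap \bigcap_{i < k} B_{y_i}$, with $Y$ enumerated $y_0 < y_1 < \cdots$, only constrains pairs of the form $(y_j, x_i)$, and since $X \subseteq Y$ need not satisfy $x_j = y_j$ the two are not interchangeable. The fix is to index the intersections by values in $Y$ below a varying threshold rather than by position, so that every previously chosen $x_j$ appears as a constraint in $A_k$ for $k$ sufficiently large; making this diagonalization precise is the only nontrivial piece of the argument.
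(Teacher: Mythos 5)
First, a framing point: the paper offers no proof of this theorem — it is quoted from Booth/Kunen as background — so there is no internal argument to compare yours against, and I can only assess it on its own terms. Your $(2)\Rightarrow(3)$ is correct. The other two implications each have a genuine gap, and both gaps have the same source: the paper's position-indexed formulation of selectivity, $X\setminus r_i(X)\subseteq A_i$, constrains the $i$-th element of $X$ only through the index $i$, whereas both remaining implications need control through the \emph{values} of the earlier elements.

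In $(3)\Rightarrow(1)$, injectivity of $f$ on $X$ tells you that $\{x\in X: f(x)<k\}$ has at most $k$ elements, hence that $X\subseteq^{*} A_k$ for every $k$ (the P-point half of the argument); it says nothing about where those few elements sit in the increasing enumeration of $X$. If, say, $f(x_{2i})=i$ while the $f(x_{2i+1})$ are huge and pairwise distinct, then $f$ is one-to-one on $X$, yet every tail of $X$ contains cofinally many elements whose position is roughly twice their $f$-value, so no deletion of an initial segment puts the $i$-th element into $A_i$ — and the bad half $\{x_{2i}: i<\omega\}$ may be the one lying in $\mathcal{U}$. The standard repair is a \emph{second} application of (3), to a finite-to-one block-index function built from the rate at which $X\subseteq^{*}A_k$: such a function cannot be constant on a set in $\mathcal{U}$, hence is one-to-one on one, and that is exactly the Q-point thinning you are missing. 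In $(1)\Rightarrow(2)$ you have diagnosed the right obstacle, but the proposed fix does not close it: however the decreasing sequence $(A_k)$ is pre-assigned, $B_{x_j}$ occurs as a factor of $A_k$ only for $k$ exceeding (roughly) the value $x_j$, while position-indexed selectivity only yields $x_i\in A_k$ for $k\le i$, and $x_j$ may well exceed $i$; "appears as a constraint for $k$ sufficiently large" is precisely not enough. The classical proof runs on the value-indexed form of selectivity ($x_{i+1}\in A_{x_i}$, equivalently $X\setminus(n+1)\subseteq A_n$ for every $n\in X$), under which indexing by values works verbatim; so you must either adopt that form throughout or first prove the two forms equivalent for ultrafilters — and that equivalence is itself the same Q-point-style diagonalization absent from your $(3)\Rightarrow(1)$. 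Until one of these steps is supplied, the cycle is not closed.
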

An ultrafilter that satisfies the second item is called a \emph{Ramsey} ultrafilter on $\omega$. Generalizations of the previous theorem have been studied in many contexts. For example, the notions of selective coideal (see \cite{Mathias}) and semiselective coideals (see \cite{Farah}) have been shown to also satisfy similar Ramsey properties. In \cite{MijaresSelective}, Mijares generalizes the notion of selective ultrafilter on $\omega$ to a notion of selective ultrafilter on an arbitrary topological Ramsey space $\mathcal{R}$. Mijares also generalizes the notion of Ramsey ultrafilter on $\omega$ to a notion of Ramsey ultrafilter for $\mathcal{R}$ and shows that if an ultrafilter is Ramsey for $\mathcal{R}$ then it is also selective for $\mathcal{R}$. If one takes $\mathcal{R}$ to be the Ellentuck space then the two generalizations reduce to the concepts of selective and Ramsey ultrafilter. The theorem of Kunen above shows that the notions of selective for the Ellentuck space and Ramsey for the Ellentuck space are equivalent. This leads to the following question asked by Dobrinen about the generalizations from selective and Ramsey to arbitrary topological Ramsey spaces.
\begin{question}
 For a given topological Ramsey space $\mathcal{R}$, are the notions of selective for $\mathcal{R}$ and Ramsey for $\mathcal{R}$ equivalent?
\end{question} 
Ramsey for $\mathcal{R}$ ultrafilters have also been studied by Dobrinen and Todorcevic in \cite{Ramsey-Class} and \cite{Ramsey-Class2}. Motivated by Tukey classification problems, the authors develop a hierarchy of topological Ramsey spaces $\mathcal{R}_{\alpha}$, $\alpha\le \omega_{1}$.  Associated to each space $\mathcal{R}_{\alpha}$ is an ultrafilter $\mathcal{U}_{\alpha}$, which is Ramsey for $\mathcal{R}_{\alpha}$. The space $\mathcal{R}_{0}$ is taken to be the Ellentuck space; therefore, Ramsey for $\mathcal{R}_{0}$ is equivalent to selective for $\mathcal{R}_{0}$. We show that for each positive integer $n$, there is a triple $(\mathcal{R}^{\star}_{n},\le, r)$ such that forcing with the space using almost-reduction, adjoins an ultrafilter that is selective for $\mathcal{R}_{n}$ but not Ramsey for $\mathcal{R}_{n}$. 

In Section $\ref{section2}$, we introduce the concept of a topological Ramsey space. The main theorem of this section is the abstract Ellentuck theorem due to Carlson and Simpson. We follow the presentation of Todorcevic in \cite{RamseySpaces} and introduce four axioms which can be used to state the abstract Ellentuck theorem. 

In Section $\ref{section3}$, we give the general setting for the main results in this article. For each positive integer $k$ and each tree $T$ on $\omega^{k}$, satisfying some conditions, we associate a triple $(\mathcal{R}(T), \le, r)$.  We also introduce the generalization of Ramsey and selective for ultrafilters on the maximal nodes of $T$ that we use in this article. (In the following we denote the maximal nodes of a tree by $[T]$.)

Section $\ref{section4}$ consists of the archetype example for the methods we apply in this article. In this section, we introduce the tree $T_{1}$ and the triple $(\mathcal{R}(T_{1}), \le, r)$ from \cite{Ramsey-Class} which we denote by $(\mathcal{R}_{1}, \le, r)$. We then construct a closely associated tree $T_{1}^{\star}$ and prove that the associated triple $\mathcal{R}(T^{\star}_{1})$ forms a topological Ramsey space. Then we show that forcing with $\mathcal{R}^{\star}_{1}$ using almost-reduction, adjoins a selective but not Ramsey for ${\mathcal R}_{1}$ ultrafilter on $[T_{1}]$.

In section $\ref{section5}$, for each positive integer $n$, we introduce the tree $T_{n}$ and the triple $(\mathcal{R}(T_{n}), \le, r)$ from \cite{Ramsey-Class2} which we denote by $(\mathcal{R}_{n}, \le, r)$. We then construct a closely associated tree $T_{n}^{\star}$ and space $\mathcal{R}(T^{\star}_{n})$. We show that forcing with $\mathcal{R}^{\star}_{n}$ using almost-reduction, adjoins a selective but not Ramsey for ${\mathcal R}_{n}$ ultrafilter on $[T_{n}]$.

In section $\ref{section6}$, we consider finite sequences $\left < S_{i} : i\le n \right>$ where each $S_{i}$ is one of the trees $T_{j}$ for some $j<\omega$. We introduce the product $\bigotimes_{i=0}^{n}\mathcal{R}(S_{i})$ from \cite{GenRamsey-Class}. Then we construct a closely associated tree $\bigotimes_{i=0}^{n} S^{\star}_{i}$ and space $\bigotimes_{i=0}^{n}\mathcal{R}^{\star}(S_{i})$.  We prove that forcing with  $\bigotimes_{i=0}^{n}\mathcal{R}^{\star}(S_{i})$ using almost-reduction, adjoins a selective but not Ramsey for $\bigotimes_{i=0}^{n}\mathcal{R}(S_{i})$ ultrafilter on $[\bigotimes_{i=0}^{n} S_{i}]$.

In section $\ref{section7}$, we discuss why the methods used in this article fail for some topological Ramsey spaces defined from similar types of trees. We conclude with some questions about the generalizations of Ramsey and selective ultrafilters to the spaces where our methods fail.

In this article we use the methods of forcing but all of our constructions can be carried out using  CH or MA. We work with $\sigma$-closed partial orders and all of the constructions only require $2^{\aleph_{0}}$ conditions to be met. For example, assuming CH we can guarantee the conditions hold at successor stages and use $\sigma$-closure at limit stages.  

The author would like to express his deepest gratitude to Natasha Dobrinen for valuable comments and suggestions that helped make this article and its proofs more readable. 
\section{Background}
 \label{section2}
A topological Ramsey space $\mathcal{R}$, by definition, is a space that satisfies an abstract version of the Ellentuck theorem. In order to state an abstract version of the Ramsey property for $\mathcal{R}$ it is necessary to have an abstract notion of the partial order ``$\subseteq$" and an abstraction notion the restriction map ``$r$". To this end, we consider triples $({\mathcal R}, \le, r)$ where $\mathcal{R}$ is a nonempty set, $\le$ is a quasi-ordering on ${\mathcal R}$ and $r:\mathcal{R} \times \omega \rightarrow  \mathcal{AR}$. For each such triple we can define an abstract notion of Ramsey and endow $\mathcal{R}$ with a topology similar to the Ellentuck space.

\begin{definition}
Let $({\mathcal R}, \le, r)$ be a triple such that $\mathcal{R}$ is a nonempty set, $\le$ is a quasi-ordering on ${\mathcal R}$ and $r:\mathcal{R} \times \omega \rightarrow  \mathcal{AR}$ is surjective. For each $a \in \mathcal{AR}$ and each $B\in \mathcal{ R}$, let 
\begin{equation} [a, B] = \{ A \in {\mathcal R}: A \le B \ \& \ (\exists n) r_{n}(A) = a \}.
\end{equation} The \emph{Ellentuck topology} on ${\mathcal R}$ is the topology generated by the sets $[a,B]$ where $a\in \mathcal{AR}$ and $B\in \mathcal{R}$.

A subset ${\mathcal X}$ of ${\mathcal R}$ is \emph{Ramsey} if for every $\emptyset \not= [a,A],$ there is a $B \in [a,A]$ such that $[a,B] \subseteq {\mathcal X}$ or $[a,B]\cap {\mathcal X} = \emptyset.$
A subset ${\mathcal X}$ of ${\mathcal R}$ is \emph{Ramsey null} if for every $\emptyset \not= [a,A],$ there is a $B \in [a,A]$ such that $[a,B]\cap {\mathcal X} = \emptyset.$

A triple $({\mathcal R}, \le, r )$ with its Ellentuck topology is a \emph{topological Ramsey space} if every subset of $\mathcal{R}$ with the Baire property is Ramsey and if every meager subset of ${\mathcal R}$ is Ramsey null.
\end{definition}

We follow the presentation of the abstract Ellentuck theorem given by Todorcevic in \cite{RamseySpaces}, rather than the earlier reference \cite{CarlsonSimpson}. In particular, we introduce four axioms about triples $({\mathcal R}, \le, r )$ sufficient for proving an abstract version of the Ellentuck theorem. The first axiom we consider tells us that $\mathcal{R}$ is collection of infinite sequences of objects and $\mathcal{AR}$ is collection of finite sequences approximating these infinite sequences. 

\noindent \emph{{\bf A.1} For each $A,B \in {\mathcal R}$, \begin{enumerate}
		\item[(a)] $r_{0}(A) = \emptyset$.
		\item[(b)] $A\not = B$ implies $r_{i}(A) \not = r_{i}(B)$ for some $i$.
		\item[(c)] $r_{i}(A) = r_{j}(B)$ implies $i=j$ and $r_{k}(A) = r_{k}(B)$ for all $k<i$.
	\end{enumerate}
}
On the basis of this axiom, $\mathcal{R}$ can be identified with a subset of $\mathcal{AR}^{\omega}$ by associating $A\in\mathcal{R}$ with the sequence $(r_{i}(A))_{i<\omega}$. Similarly, $a\in\mathcal{AR}$ can be identified with $(r_{i}(A))_{i<j}$ where $j$ is the unique natural number such that $a=r_{j}(A)$ for some $A\in\mathcal{R}$. For each $a\in \mathcal{AR}$, let $|a|$ equal the natural numbers $i$ for which $a=r_{i}(a)$. For $a,b \in \mathcal{AR}$, $a \sqsubseteq b$ if and only if $a=r_{i}(b)$ for some $i \le |b|$. $a \sqsubset b$ if and only if $a=r_{i}(b)$ for some $i<|b|$.

\noindent\emph{{\bf A.2} There is a quasi-ordering $\le_{\mathrm{fin}}$ on $\mathcal{AR}$ such that
	\begin{enumerate}
		\item[(a)] $\{a \in \mathcal{AR} : a \le_{\mathrm{fin}} b\}$ is finite for all $b \in {\mathcal AR}$,
		\item[(b)] $ A \le B$ iff $(\forall i)(\exists j) \ r_{i}(A) \le_{\mathrm{fin}} r_{j}(B),$
		\item[(c)] $\forall a,b,c \in {\mathcal AR} [ a \sqsubseteq b \wedge b \le_{\mathrm{fin}} c \rightarrow \exists d \sqsubseteq c \ a \le_{\mathrm{fin}} d ].$
	\end{enumerate}}
For $a\in\mathcal{AR}$ and $B\in\mathcal{R}$ $\mathrm{depth}_{B}(a)$ is the least $i$, if it exists, such that $ a \le_{\mathrm{fin}} r_{i}(B)$. If such an $i$ does not exist, then we write $\mathrm{depth}_{B}(a) = \infty$. If depth$_{B}(a)=i< \infty,$ then $[\mathrm{depth}_{B}(a),B]$ denotes $[r_{i}(a), B]$.  

\noindent\emph{{\bf A.3} For each $A, B\in\mathcal{ R}$ and each $a\in\mathcal{AR}$,
	\begin{enumerate}
		\item[(a)] If $\mathrm{depth}_{B}(a) < \infty$ then $[a,A]\not = \emptyset$ for all $A \in [\mathrm{depth}_{B}(a),B]$.
		\item[(b)] $A \le B$ and $[a, A]\not = \emptyset$ imply that there is an $A' \in [ \mathrm{depth}_{B}(a),B]$ such that $\emptyset \not = [a, A'] \subseteq [a, A]$.
	\end{enumerate}}
If $n>|a|$, then $r_{n}[a,A]$ denotes the collection of all $b\in {\mathcal AR}_{n}$ such that $ a \sqsubset b$ and $b \le_{\mathrm{fin}} A$.

\noindent \emph{{\bf A.4} For each $B\in{\mathcal R}$ and each $a\in\mathcal {AR}$, if depth$_{B}(a) < \infty$ and $\mathcal{ O} \subseteq \mathcal{AR}_{|a|+1}$, then there is $A \in [\mathrm{depth}_{B}(a), B]$ such that
\begin{equation} r_{|a|+1}[a,A] \subseteq {\mathcal O}\mbox{ or } r_{|a|+1}[a, A] \subseteq {\mathcal O}^{c}.\end{equation}
}
The next result, using a slightly different set of axioms, is a theorem of Carlson and Simpson in \cite{CarlsonSimpson}. The version using {\bf A.1}-{\bf A.4} can be found as Theorem 5.4 in \cite{RamseySpaces}.
\begin{theorem}[Abstract Ellentuck theorem, \cite{CarlsonSimpson}] If $(\mathcal{R}, \le , r)$ is a closed subspace of $\mathcal{AR}^{\omega}$ and satisfies {\bf A.1, A.2, A.3} and {\bf A.4} then $(\mathcal{R}, \le , r)$ forms a topological Ramsey space.
\end{theorem}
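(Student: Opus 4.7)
The plan is to follow the standard route for proving abstract Ellentuck-type theorems: first establish a combinatorial forcing dichotomy (a Galvin--Nash-Williams style lemma) for subsets of $\mathcal{AR}$, then use it together with a fusion argument along decreasing sequences in $\mathcal{R}$ to lift the dichotomy to arbitrary subsets of $\mathcal{R}$ with the Baire property. The hypothesis that $\mathcal{R}\subseteq\mathcal{AR}^\omega$ is closed will be used to ensure that the limit of a fusion sequence of approximations is a genuine member of $\mathcal{R}$.

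First I would introduce combinatorial forcing: given $\mathcal{F}\subseteq\mathcal{AR}$, say that a nonempty $[a,B]$ is \emph{accepted} by $\mathcal{F}$ if every $C\in[a,B]$ satisfies $r_n(C)\in\mathcal{F}$ for some $n$, and \emph{rejected} if no $[a,B']\subseteq[a,B]$ is accepted. The first key step is the dichotomy: every nonempty $[a,A]$ can be refined to some $B\in[\mathrm{depth}_A(a),A]$ such that $[a,B]$ is accepted or rejected by $\mathcal{F}$. The one-level version of this statement, applied to the partition of $r_{|a|+1}[a,A]$ into $\mathcal{F}$ and $\mathcal{F}^c$, is exactly axiom \textbf{A.4}; the full version is obtained by iterating \textbf{A.4} through the successor extensions $b\in r_{|a|+1}[a,A]$, using \textbf{A.3}(b) to keep the refinements compatible, and then fusing. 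Axiom \textbf{A.2}(a) keeps each finite stage combinatorial, \textbf{A.2}(c) guarantees depth amalgamation, and \textbf{A.1} ensures that the limiting sequence of approximations determines an element of $\mathcal{AR}^\omega$, which lies in $\mathcal{R}$ by the closedness hypothesis.

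Given this dichotomy, I would first prove the theorem for open sets, then extend to meager and Baire sets. For an open $\mathcal{X}$ and $\emptyset\neq[a,A]$, let $\mathcal{F}$ be the set of approximations $b$ with $[b,B]\subseteq\mathcal{X}$ for some witness $B$. Applying the dichotomy and then diagonalizing over all $b\sqsubseteq B$ via a fusion that preserves $r_{|a|}(B)=a$ produces a $B\in[a,A]$ deciding $\mathcal{X}$ on $[a,B]$. Closed sets follow by complementation. For a nowhere-dense $\mathcal{X}$ one uses the same machinery to show it is Ramsey null, and then a countable fusion handles meager sets. Since every Baire-property set differs from an open set by a meager set, the two conclusions combine to give that every Baire-property subset of $\mathcal{R}$ is Ramsey and every meager subset is Ramsey null, which is the definition of a topological Ramsey space.

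The main obstacle is the fusion step. One must simultaneously (i) decide each relevant subset of $\mathcal{AR}$ above every finite approximation $b\sqsubset B$, (ii) preserve the prescribed initial segment $a$ so that the limit lies in $[a,A]$, and (iii) guarantee that the limit is in $\mathcal{R}$ at all. Axioms \textbf{A.3}(a) and \textbf{A.3}(b), together with depth amalgamation from \textbf{A.2}, are precisely what permits (i) and (ii) to be carried out compatibly inside a single $[\mathrm{depth}_A(a),A]$, while the closed-subspace assumption on $\mathcal{R}\subseteq\mathcal{AR}^\omega$ supplies (iii). Once the combinatorial dichotomy and the fusion machinery are in place, the remaining steps are organizational rather than substantive.
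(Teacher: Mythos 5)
The paper does not actually prove this theorem---it is imported from Carlson and Simpson via Theorem 5.4 of \cite{RamseySpaces}---and your sketch is precisely the standard argument given there: combinatorial forcing (accept/reject/decide), the decision dichotomy obtained by iterating \textbf{A.4} with \textbf{A.3}(b) and fusing, the open case first, then nowhere-dense, meager, and Baire sets, with closedness of $\mathcal{R}$ in $\mathcal{AR}^{\omega}$ guaranteeing that fusion limits land in $\mathcal{R}$. Your outline is correct and deploys each axiom where the standard proof does; the only caveat is that it is an outline, and the substance of the theorem lives in the fusion details you defer.
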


\section{General setting}
\label{section3}
In this section, in order to avoid repeating similar definitions, we introduce a framework for constructing triples from trees.  For each set $X$, $X^{<\omega}$ denotes the collection of all finite sequences of elements of $X$. For each finite sequence $s$, we let $|s|$ denote the length of $s$. For each $i\le|s|$, $\pi_{i}(s)$ denotes the sequence of the first $i$ elements of $s$ and $s_{i}$ denotes the $i^{th}$ element of the sequence. For each pair of sequences $s$ and $t$, we say that $s$ is an \emph{initial segment} of $t$ and write $s\sqsubseteq t$ if there exists $i\le|t|$ such that $s=\pi_{i}(t)$.

The \emph{closure} of $T \subseteq X^{<\omega}$ (denoted by $cl(T)$) is the set of all initial segments of elements of $T$. A subset $T$ of $X^{<\omega}$ is a \emph{tree} on $X$, if $cl(T)=T$. A \emph{maximal node} of $T$, is a sequence $s$ in $T$ such that for each $t\in T$, $s \sqsubseteq t \Rightarrow s=t$. The \emph{body} of $T$ (denoted by $[T]$) is the set of all maximal nodes of $T$. The \emph{height} of $T$ is the smallest ordinal greater than or equal to the length of each element of $T$. 

Let $k$ be a positive integer. The \emph{lexicographical order} of $(\omega^{k})^{<\omega}$ is defined as follows: $s$ is lexicographically less than $t$ if and only if $s\sqsubseteq t$ or $|s|=|t|$ and the least $i$ on which $s$ and $t$ disagree, $s_{i} \le t_{i}$ where $\le$ is taken to be the product order on $\omega^{k}$. If $S$ and $T$ are trees on $\omega^{k}$, then $S$ is \emph{isomorphic} to $T$, if there exists a bijection $h:S \rightarrow T$ which preserves the lexicographic ordering. A \emph{subtree} of $T$ is a tree $S$ such that $S\subseteq T$. Given two trees $S$ and $T$ on $\omega^{k}$, we let ${ T \choose S}$ denote the set of all subtrees of $T$ that are isomorphic to $S$. If $S$ is a subtree of $T$ then we will write $S\le T.$

For each positive integer $k$ and trees $S$, $T$ and $U$ on $\omega^{k}$, the partition relation
\begin{equation}
T \rightarrow(S)^{U}
\end{equation}
means that for each partition of ${T\choose U}$ into two parts there exists $V\in{ T \choose S}$ such that ${V \choose U}$ lies in one part of the partition.

\begin{definition}[($\mathcal{R}(T),\le, r$)]
Suppose that $k$ is a positive integer. Let $T$ be a tree on $\omega^{k}$ such that for all $s,t\in[T]$, $|s|=|t|$ and $\pi_{0}''[T]=\{ \left<(n,\dots, n)\right>\in \omega^{k}: n<\omega\}$.  Let $\mathcal{R}(T)$ denote the set of all subtrees of $T$ isomorphic to $T$, \emph{i.e.} ${T\choose T}$. For each $S\in \mathcal{R}(T)$ we let $\{\left< (k^{S}_{i}, \dots, k^{S}_{i}) \right> : i<\omega\}$ denote the lexicographically increasing enumeration of $\pi_{0}''[S]$. For each $i<\omega$, let
\begin{equation}
S(i) = cl(\{ s\in [S]: \pi_{0}(s) =\left< (k^{S}_{i}, \dots, k^{S}_{i}) \right>\})
\end{equation}
Let $\mathcal{AR}(T) = \bigcup_{i<\omega} \{ r_{i}(S) : S\in\mathcal{R}(T)\}$ and define $r: \omega\times \mathcal{R}(T) \rightarrow \mathcal{AR}(T)$ by letting $r(i, S) = \bigcup_{j<i} S(j)$. For $S,S'\in\mathcal{R}_{1}$, $S\le S'$ if and only if $S$ is subtree of $S'$. For $S,S'\in\mathcal{R}(T)$ \emph{almost-reduction} is defined as follows: $S\le^{*} S'$ if and only if there exists $i<\omega$ such that $S\setminus r_{i}(S) \subseteq S'.$
\end{definition}
Next following Dobrinen and Todorcevic in \cite{Ramsey-Class} and \cite{Ramsey-Class2} we introduce a generalization of the notion of Ramsey and selective for triples built from trees.

\begin{definition} \label{RamseyDef}  Let $k$ be a positive integer and $T$ be a tree on $\omega^{k}$. Suppose that $(\mathcal{R}(T),\le,r)$ satisfies $\bf{A.1}$-$\bf{A.4}$ and forms a topological Ramsey space.  Let $\mathcal{U}$ be an ultrafilter on $[T]$. 
\begin{enumerate}
\item We say that $\mathcal{U}$ is \emph{generated by $\mathcal{G} \subseteq \mathcal{R}(T)$}, if $\{[S] : S \in\mathcal{G}\}$ is cofinal in $( \mathcal{U},\supseteq)$.
\item An ultrafilter $\mathcal{U}$ generated by $\mathcal{G} \subseteq \mathcal{R}(T)$ is \emph{selective for $\mathcal{R}(T)$} if and only if for each decreasing sequence $S_{0}\ge S_{1} \ge S_{2} \ge \dots $ of elements of $\mathcal{G}$, there exists another $S\in\mathcal{G}$ such that for all $i<\omega$, $S \setminus r_{i}(S)\subseteq S_{i}.$
\item An ultrafilter $\mathcal{U}$ generated by $\mathcal{G} \subseteq \mathcal{R}(T)$ is \emph{Ramsey for $\mathcal{R}(T)$} if and only if for each $i<\omega$ and each partition of ${ T \choose r_{i}(T)}$ into two parts there exists $S\in\mathcal{G}$ such that ${S \choose r_{i}(T)}$ lies in one part of the partition. 
\end{enumerate}
\end{definition}

The next result addresses the existence of Ramsey ultrafilters for $\mathcal{R}(T)$. We omit its proof as it follows by applying Lemma 3.3 of Mijares in \cite{MijaresSelective} to topological Ramsey spaces of the form $\mathcal{R}(T)$.
\begin{theorem}[\cite{MijaresSelective}] Let $k$ be a positive integer and $T$ be a tree on $\omega^{k}$. Suppose that $(\mathcal{R}(T),\le,r)$ satisfies $\bf{A.1}$-$\bf{A.4}$ and forms a topological Ramsey space.  Forcing with $(\mathcal{R}(T), \le^{*})$ adjoins no new elements of $(\mathcal{AR}(T))^{\omega}$, and if $\mathcal{G}$ is a $(\mathcal{R}(T), \le^{*})$-generic filter over some ground model $V$, then $\mathcal{G}$ generates a Ramsey for $\mathcal{R}(T)$ ultrafilter in $V[\mathcal{G}]$.
\end{theorem}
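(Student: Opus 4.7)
The theorem makes two claims: $(\mathcal{R}(T),\le^{*})$ adds no new element of $(\mathcal{AR}(T))^{\omega}$, and every $(\mathcal{R}(T),\le^{*})$-generic filter generates a Ramsey-for-$\mathcal{R}(T)$ ultrafilter. My plan is to derive both assertions from the abstract Ellentuck theorem together with a standard fusion argument, following the blueprint of Mijares' Lemma 3.3 in \cite{MijaresSelective}.

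First, for the preservation part, I would verify that $(\mathcal{R}(T),\le^{*})$ is $\sigma$-closed. Given a $\le^{*}$-decreasing chain $S_{0} \ge^{*} S_{1} \ge^{*} \cdots$, pick $m_{k}<\omega$ with $S_{k}\setminus r_{m_{k}}(S_{k}) \subseteq S_{k-1}$ and perform a fusion, building a lower bound $S$ one block at a time, where the $k$-th block is drawn from inside $[r_{m_{k}}(S_{k}),S_{k}]$. Axiom A.3(b) guarantees that these neighborhoods can be refined without leaving the required intersections, while A.1 and A.2 ensure that successive finite approximations are compatible and amalgamate into a single $S\in\mathcal{R}(T)$ with $S\le^{*}S_{k}$ for every $k$. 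Since $\sigma$-closed forcing adds no new $\omega$-sequences of ground-model objects, the first claim follows.

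Second, for the Ramsey-filter property, I would establish two density statements in $(\mathcal{R}(T),\le^{*})$ and then appeal to genericity. Given a ground-model partition $c\colon {T \choose r_{i}(T)} \to 2$, the set $\mathcal{D}_{c}$ of those $S \in \mathcal{R}(T)$ for which ${S \choose r_{i}(T)}$ lies entirely on one side of $c$ is dense: below any given condition, $i$ successive applications of the pigeonhole-style axiom A.4 produce such a homogeneous $S$. Meeting $\mathcal{D}_{c}$ delivers item (3) of Definition \ref{RamseyDef}. Analogously, given a ground-model $X \subseteq [T]$, the collection of $S$ with $[S]\subseteq X$ or $[S]\cap X = \emptyset$ is dense by the same scheme applied to the branch-level partition induced by $X$; this ensures that $\{[S] : S \in \mathcal{G}\}$ generates an ultrafilter on $[T]$ rather than a mere filter.

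The main obstacle is the density claim for $\mathcal{D}_{c}$, because $c$ is defined on arbitrary subtree-copies of $r_{i}(T)$ in $T$, whereas A.4 directly controls only the successor approximations $r_{|a|+1}[a,A]$. One therefore has to canonically associate each subtree-copy with an approximation of some $S \in \mathcal{R}(T)$ and track the interaction between A.4-homogenization and this identification through the $i$ levels of $r_{i}(T)$, verifying that no copy is skipped by the iterated selection. This bookkeeping is the nontrivial content of Mijares' Lemma 3.3, and once it is in hand the remainder of the argument proceeds as sketched above.
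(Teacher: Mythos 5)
The paper does not actually prove this statement: it is quoted from \cite{MijaresSelective}, and the text explicitly omits the proof, deferring to Mijares' Lemma 3.3. So the only fair comparison is with that cited argument and with the concrete analogues the paper does write out in Section \ref{section4}, namely the block-wise fusion of Lemma \ref{sigma-closed} and the density of the sets $\Delta_{X}$ and $\Delta_{(S_{0},S_{1},\dots)}$ in the proof that $\Gamma''\mathcal{G}$ generates a selective ultrafilter. Your sketch follows exactly that blueprint, and the $\sigma$-closure half is fine. The one step you should repair is the density of $\mathcal{D}_{c}$: finitely many ($i$) applications of \textbf{A.4} cannot homogenize ${S \choose r_{i}(T)}$, because \textbf{A.4} only decides the one-step extensions of a single fixed stem $a$, while a condition has infinitely many stems of each length below $i$; what is needed is either a genuine fusion over all stems or, more efficiently, a direct appeal to the hypothesis that $\mathcal{R}(T)$ is a topological Ramsey space: the set $\mathcal{X}_{0}=\{A\in\mathcal{R}(T): r_{i}(A)\in\Pi_{0}\}$ is metrically clopen, hence has the Baire property, hence is Ramsey, and any $B$ with $[\emptyset,B]\subseteq\mathcal{X}_{0}$ or $[\emptyset,B]\cap\mathcal{X}_{0}=\emptyset$ has ${B \choose r_{i}(T)}$ monochromatic, once one checks that every subtree of $B$ isomorphic to $r_{i}(T)$ arises as $r_{i}(A)$ for some $A\le B$ (this is the bookkeeping you flag, and for the trees used in this paper it holds because any copy of $r_{i}(T)$ in $B$ extends to a copy of $T$). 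With that substitution your argument closes; the ultrafilter density is then essentially the $i=1$ instance, using that for the trees considered here each copy of $r_{1}(T)$ is the closure of a single maximal node, so a subset $X\subseteq[T]$ (which lies in $V$ by $\sigma$-closure) induces a two-coloring of ${T \choose r_{1}(T)}$ whose homogenization gives $[S]\subseteq X$ or $[S]\cap X=\emptyset$.
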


In the next section, we give an example of a tree $T$ defined by Dobrinen and Todorcevic in \cite{Ramsey-Class} where the notions of Ramsey for $\mathcal{R}(T)$ and selective for $\mathcal{R}(T)$ ultrafilters on $[T]$ are not equivalent.

\section{Selective but not Ramsey for $\mathcal{R}_{1}$}
\label{section4}
We begin this section with the definition of the triple $(\mathcal{R}_{1}, \le, r)$. This space was first defined by Dobrinen and Todorcevic in \cite{Ramsey-Class}. The construction of $\mathcal{R}_{1}$ in \cite{Ramsey-Class} was inspired and motivated by the work of Laflamme in \cite{Laflamme} which uses forcing to adjoin a weakly-Ramsey ultrafilter satisfying complete combinatorics over $\mathrm{HOD}(\mathbb{R})$.

The purpose of this section is to introduce a closely related triple $(\mathcal{R}^{\star}_{1}, \le, r)$ and show that forcing with $\mathcal{R}^{\star}_{1}$ using almost-reduction, adjoins an ultrafilter that is selective but not Ramsey for $\mathcal{R}_{1}$.
\begin{definition}[$(\mathcal{R}_{1}, \le, r)$, \cite{Ramsey-Class}]
For each $i<\omega$, let
\begin{equation}
T_{1}(i) =\left  \{ \left< \ \right>, \left < i \right >, \left< i, j \right >:  \frac{i(i+1)}{2}\le j <\frac{(i+1)(i+2)}{2} \right\}
\end{equation}
Let $T_{1} = \bigcup_{i<\omega} T_{1}(i)$ and $(\mathcal{R}_{1},\le,r)$ denote the triple $(\mathcal{R}(T_{1}), \le, r)$. Figure $\ref{graphT1}$ includes a graph of the tree $T_{1}$.

\end{definition}
The next result is Theorem 3.9 of Dobrinen and Todorcevic in \cite{Ramsey-Class}.
\begin{theorem}[\cite{Ramsey-Class}]
$(\mathcal{R}_{1}, \le, r)$ satisfies $\bf{A.1}$-$\bf{A.4}$ and  forms a topological Ramsey space.
\end{theorem}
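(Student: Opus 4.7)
The plan is to verify axioms \textbf{A.1}--\textbf{A.4} for the triple $(\mathcal{R}_1, \le, r)$, with $\le_{\mathrm{fin}}$ on $\mathcal{AR}(T_1)$ taken to be subtree inclusion. Axioms \textbf{A.1}--\textbf{A.3} will follow essentially from the piece-by-piece decomposition $S = \bigcup_{i<\omega} S(i)$ built into the definition of $r$; the substantive step is \textbf{A.4}, which I will reduce to the finite Ramsey theorem together with the infinite pigeonhole principle.

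First, \textbf{A.1}: (a) is immediate from $r_0(S)=\bigcup_{j<0}S(j)=\emptyset$; each piece $S(i)$ is uniquely recoverable from $S$ as the subtree through the $i$-th lex-least element of $\pi_0''[S]$, giving (b); and (c) follows because $r_i(S)$ records exactly the first $i$ pieces in order. For \textbf{A.2}, (a) holds because each $b\in\mathcal{AR}(T_1)$ is finite and so has only finitely many sub-$\mathcal{AR}$-subtrees; (b) merely unfolds $A\le B$ as subtree inclusion; and in (c), if $a\sqsubseteq b \le_{\mathrm{fin}} c$ then $a\subseteq c$, so any long enough initial segment of $c$ serves as $d$. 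For \textbf{A.3}(a), if $A\in[\mathrm{depth}_B(a),B]$ with $d=\mathrm{depth}_B(a)$ then $a\subseteq r_d(A)$, and one extends $a$ to a member of $\mathcal{R}_1$ inside $A$ by inductively choosing $T_1(i)$-sized subsets of successors from $A$'s later pieces. For \textbf{A.3}(b), given a witness $C\in[a,A]$, set $r_d(A')=r_d(B)$ and choose the remaining pieces $A'(i)$ (for $i\ge d$) to be $T_1(i)$-sized sub-pieces of suitable pieces of $A$ whose stems lie past $\langle k^B_{d-1}\rangle$; since $a$'s pieces already lie in $A$ and the later pieces of $A'$ are subtrees of $A$, any $C'\in[a,A']$ automatically lies in $[a,A]$.

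The heart of the argument is \textbf{A.4}. Fix $B\in\mathcal{R}_1$, $a\in\mathcal{AR}(T_1)$ with $n=|a|$ and $d=\mathrm{depth}_B(a)$, and $\mathcal{O}\subseteq\mathcal{AR}_{n+1}(T_1)$. A crucial preliminary observation is that the last stem of $a$ coincides with $\langle k^B_{d-1}\rangle$: otherwise $a$ would already fit inside $r_{d-1}(B)$, contradicting minimality of $d$. Consequently, for any $A'\in[\mathrm{depth}_B(a),B]$ and any $C\in[a,A']$, the stem of $C(n)$ is $\langle k^{A'}_j\rangle$ for some $j\ge d$, i.e., lives in $A'$'s freely-chosen part beyond $r_d(B)$, with no ``in-between'' stems to contend with. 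For each $m\ge d$, $\mathcal{O}$ induces a 2-coloring of the size-$(n+1)$ subsets of the $m+1$ successors of $\langle k^B_m\rangle$. Pick $d\le m_d<m_{d+1}<\cdots$ with $m_i+1$ at least the Ramsey number $R_2(n+1;i+1)$; by finite Ramsey, for each $i\ge d$ there is a size-$(i+1)$ subset $F_i$ of the successors of $\langle k^B_{m_i}\rangle$ whose $(n+1)$-subsets are uniformly colored $c^*_i\in\{0,1\}$. Infinite pigeonhole then yields $c\in\{0,1\}$ and an infinite $I=\{i_0<i_1<\cdots\}\subseteq[d,\omega)$ with $c^*_{i_j}=c$ for every $j$.

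Finally, define $A''$ by $A''(i)=B(i)$ for $i<d$ and, for $i\ge d$, let $A''(i)$ consist of the stem $\langle k^B_{m_{i_{i-d}}}\rangle$ together with an arbitrary $(i+1)$-element subset of $F_{i_{i-d}}$; this is legal because $i_{i-d}\ge i_0+(i-d)\ge d+(i-d)=i$, so $F_{i_{i-d}}$ has at least $i+1$ elements. Every element of $r_{n+1}[a,A'']$ has the form $a\cup e$, where $e$ is built from some $A''(i)$-stem (with $i\ge d$) together with an $(n+1)$-subset of its successors sitting inside $F_{i_{i-d}}$, and hence receives color $c$ under $\mathcal{O}$. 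So $r_{n+1}[a,A'']\subseteq\mathcal{O}$ if $c=1$ and $\subseteq\mathcal{O}^c$ if $c=0$, verifying \textbf{A.4}. The main obstacle I foresee is pinning down the no-in-between observation and coordinating the two-stage indexing (first the $m_i$'s for the finite Ramsey application, then the $i_j$'s for infinite pigeonhole) so that every constructed piece has the exact $T_1(i)$-shape demanded; beyond this bookkeeping, the Ramsey-plus-pigeonhole argument is standard.
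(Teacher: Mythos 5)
The paper does not prove this statement: it is quoted as Theorem 3.9 of Dobrinen and Todorcevic \cite{Ramsey-Class}, so there is no in-paper argument to compare against line by line. Your proof is essentially correct and is, in substance, the same strategy the paper itself uses for the starred spaces $\mathcal{R}^{\star}_{1}$ and $\mathcal{R}^{\star}_{n}$ (Lemmas \ref{BaseCase} and \ref{pigeonhole}, Theorem \ref{RamseySpace}): reduce {\bf A.4} to a one-block partition relation, homogenize each block by a finite partition theorem, and diagonalize with the infinite pigeonhole principle. The one difference is that the paper imports its finite partition relation from Mijares's theorem applied to an already-known Ramsey space, whereas you derive it directly from the classical finite Ramsey theorem applied to the $(n+1)$-subsets of the successors of a stem; your route is more self-contained and works because the blocks $T_{1}(i)$ are single stems with fans of successors, so a ``block of size $n+1$ below $B$'' really is just an $(n{+}1)$-subset of a fan. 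Your bookkeeping is sound: $\mathrm{depth}_{B}(a)\ge|a|$ guarantees the extension in {\bf A.3}(a) is possible, the minimality of $d=\mathrm{depth}_{B}(a)$ forces the last stem of $a$ to be $\langle k^{B}_{d-1}\rangle$ so that no in-between stems arise, and the inequality $i_{i-d}\ge i$ makes the final tree $A''$ well-formed. Two small loose ends: the abstract Ellentuck theorem as stated in Section \ref{section2} also requires $\mathcal{R}_{1}$ to be a \emph{closed} subspace of $\mathcal{AR}^{\omega}$, which you never verify (it is immediate, since a coherent increasing sequence of approximations unions to a subtree of $T_{1}$ isomorphic to $T_{1}$, but it should be said); and your {\bf A.4} argument tacitly assumes $a\neq\emptyset$ when it speaks of ``the last stem of $a$'' --- the case $a=\emptyset$, $d=0$ needs the one-line remark that then every stem of $A''$ is available and the same homogenization applies.
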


The next result is a consequence of Theorem 3.5 of Mijares in \cite{MijaresGalvin} applied to the topological Ramsey space $\mathcal{R}_{1}$.
\begin{lemma}[\cite{MijaresGalvin}]
\label{finiteRamseyR1}
For each pair of positive integer $k$ and $n$ with $k\le n$ there exists $m<\omega$ such that
\begin{equation}
r_{m}(T_{1}) \rightarrow (r_{k}( T_{1}))^{r_{k}(T_{1})}.
\end{equation}
\end{lemma}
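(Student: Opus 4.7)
The statement is a finite-dimensional Ramsey theorem for the topological Ramsey space $\mathcal{R}_{1}$, asserting that a suitably large initial segment $r_{m}(T_{1})$ has the arrow property with respect to copies of $r_{k}(T_{1})$. My plan is to reduce this to the infinite pigeonhole principle for $\mathcal{R}_{1}$ (axiom \textbf{A.4}, already available since $\mathcal{R}_{1}$ has been shown to be a topological Ramsey space) via a standard compactness argument.

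First I would establish the infinite version: for every $2$-coloring $c$ of ${T_{1} \choose r_{k}(T_{1})}$ there exists $S\in\mathcal{R}_{1}$ such that the restriction of $c$ to ${S \choose r_{k}(T_{1})}$ is constant. To see this, define $\mathcal{X}=\{A\in\mathcal{R}_{1}:c(r_{k}(A))=0\}$, which is clopen in the Ellentuck topology. By the abstract Ellentuck theorem, $\mathcal{X}$ is Ramsey, so there exists $B\in\mathcal{R}_{1}$ with $[\emptyset,B]\subseteq\mathcal{X}$ or $[\emptyset,B]\cap\mathcal{X}=\emptyset$. The bridging observation is that every $V\in{B\choose r_{k}(T_{1})}$ equals $r_{k}(A)$ for some $A\in[\emptyset,B]$: the $k$ top-level branching nodes of $V$ already satisfy the correct widening condition (the $j$-th such node carries $j+1$ children), and they can always be extended to a full isomorphic copy of $T_{1}$ inside $B$, since there are unboundedly many sufficiently wide top-level branching nodes available in $B$ above $V$. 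Hence $c$ is constant on ${B \choose r_{k}(T_{1})}$.

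Next, for compactness, assume toward contradiction that for the fixed pair $k\le n$ the conclusion fails for every $m$. Fix for each $m$ a coloring $c_{m}:{r_{m}(T_{1}) \choose r_{k}(T_{1})}\to 2$ admitting no monochromatic copy of $r_{n}(T_{1})$. Applying K\"onig's lemma to the finitely branching tree of partial $2$-colorings on the countable set ${T_{1} \choose r_{k}(T_{1})}$, extract a limit coloring $c:{T_{1} \choose r_{k}(T_{1})}\to 2$ whose restriction to each ${r_{m}(T_{1}) \choose r_{k}(T_{1})}$ agrees with $c_{m'}$ for some $m'\ge m$. Then no $r_{n}(T_{1})$-copy anywhere inside $T_{1}$ is $c$-monochromatic. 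But the infinite version applied to $c$ produces $S\in\mathcal{R}_{1}$ with $c$ constant on ${S \choose r_{k}(T_{1})}$, and then $r_{n}(S)\in{S\choose r_{n}(T_{1})}\subseteq {T_{1}\choose r_{n}(T_{1})}$ is a $c$-monochromatic $r_{n}(T_{1})$-copy, a contradiction.

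The main obstacle is the bridging observation in the infinite step: identifying an arbitrary $V\in{B \choose r_{k}(T_{1})}$ with $r_{k}(A)$ for some $A\le B$. This uses the specific combinatorial structure of $T_{1}$ (each $\langle i\rangle$ has exactly $i+1$ children and the widening condition is monotone), so it would not transfer verbatim to all tree-based spaces $\mathcal{R}(T)$, but for $T_{1}$ it is a routine verification. The remaining steps are the general template for deducing finite Ramsey statements from infinite ones and are formally identical to the classical case, so the proof is essentially an application of the pigeonhole principle that underlies $\mathcal{R}_{1}$ being a topological Ramsey space, as in Mijares's general framework.
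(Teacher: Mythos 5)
Your proof is correct, but note that the paper itself does not prove this lemma: it is imported as an instance of Theorem 3.5 of Mijares \cite{MijaresGalvin}, a general Galvin-type finite-dimensional Ramsey theorem for abstract topological Ramsey spaces, so your proposal supplies a self-contained argument where the paper supplies a citation. Your two steps are precisely the standard template underlying that general theorem, and both are sound here. In the infinite step, $\mathcal{X}=\{A\in\mathcal{R}_{1}:c(r_{k}(A))=0\}$ and its complement are unions of basic open sets $[a,T_{1}]$ with $|a|=k$, so $\mathcal{X}$ is clopen in the Ellentuck topology and hence Ramsey; your bridging observation is the one place where the specific structure of $T_{1}$ enters, and it does hold: it amounts to the identity $\{r_{k}(A):A\in\mathcal{R}_{1},\ A\le B\}={B\choose r_{k}(T_{1})}$, which is valid because the $j$-th block of any lexicographic copy of $r_{k}(T_{1})$ inside $B$ already carries the required $j+1$ maximal nodes, and $B$ contains cofinally many blocks of every larger width with which to complete the copy to a member of $[\emptyset,B]$. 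The K\"onig-lemma step transfers the failure of monochromaticity correctly, since every copy of $r_{n}(T_{1})$ is finite and therefore lies in some $r_{m}(T_{1})$ on which the limit coloring agrees with a bad coloring $c_{m'}$, $m'\ge m$. One small point: as printed the lemma reads $r_{m}(T_{1})\rightarrow(r_{k}(T_{1}))^{r_{k}(T_{1})}$, which holds trivially with $m=k$ because ${r_{k}(T_{1})\choose r_{k}(T_{1})}$ is a singleton; the intended target is $r_{n}(T_{1})$, as the application in Lemma \ref{BaseCase} confirms, and that intended, nontrivial statement is the one you have proved.
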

Next we define the topological Ramsey space $\mathcal{R}_{1}^{\star}$. The space is constructed from a modified version of the tree $T_{1}$. The modified tree $T_{1}^{\star}$ has height $3$ and for each $i<\omega$, the maximal nodes of $T^{\star}_{1}(i)$ are in one-to-one correspondence with the maximal nodes of $T_{1}(i)$. These two properties of $T_{1}^{\star}$ are used below to show that forcing with $\mathcal{R}_{1}^{\star}$ using almost-reduction adjoins an ultrafilter that is selective but not Ramsey for $\mathcal{R}_{1}$. The one-to-one correspondence is used to show that the adjoined ultrafilter is selective for  $\mathcal{R}_{1}$ and the extra level of $T_{1}^{\star}$ is used to show that the adjoined ultrafilter fails to be Ramsey for $\mathcal{R}_{1}$.
\begin{definition}[$(\mathcal{R}^{\star}_{1}, \le , r)$]  For each $i<\omega$, let
\begin{equation}
T_{1}^{\star}(i)= cl(\{ \left<i,j,k \right> : k\le i \ \& \ \left<j,k \right>\in T_{1}\}).
\end{equation}
Let $T_{1}^{\star}= \bigcup_{i<\omega} T_{1}^{\star}(i)$ and $(\mathcal{R}^{\star}_{1},\le,r)$ denote the triple $(\mathcal{R}(T^{\star}_{1}), \le, r)$. Figure $\ref{graphT1}$ is a graph of the tree $T^{\star}_{1}$.
\end{definition}
\begin{figure}[h!]
\includegraphics{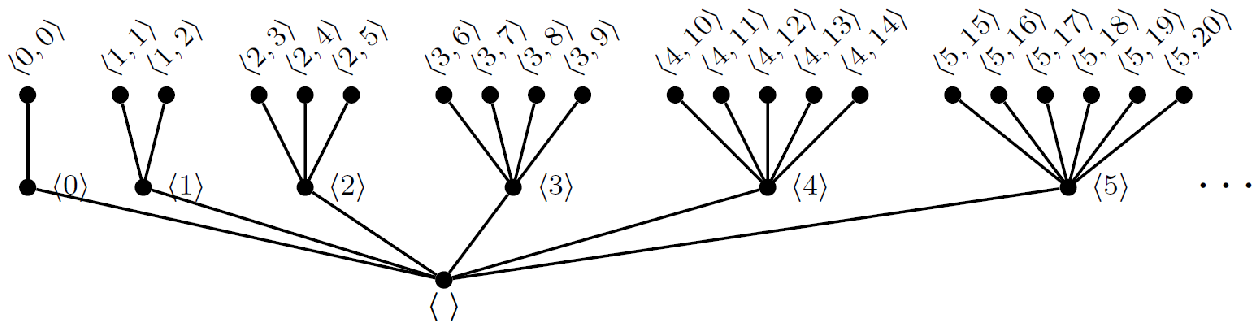}
\includegraphics{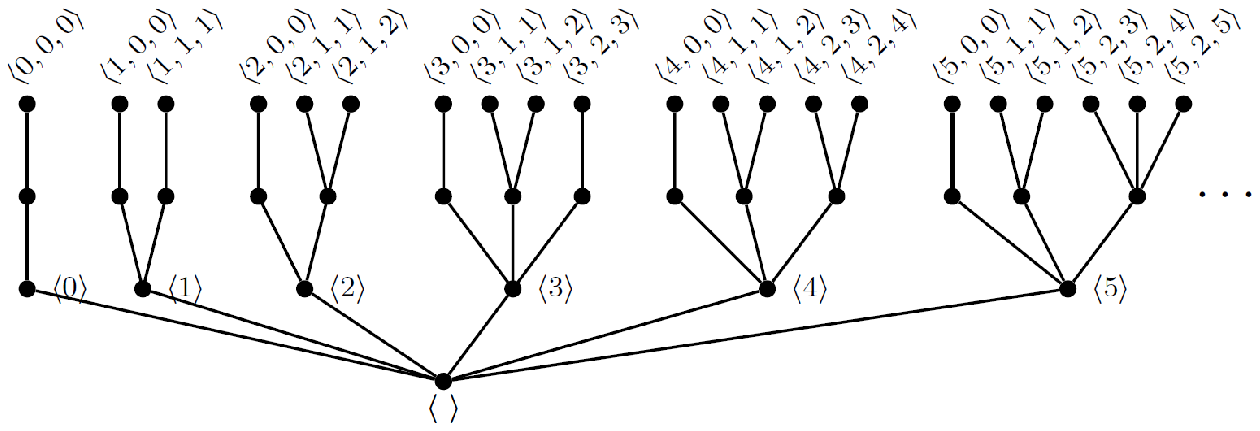}
\caption{Graph of $T_{1}$ and $T_{1}^{\star}$}
\label{graphT1}
\end{figure}
The next two partition properties are need to show that $\mathcal{R}^{\star}_{1}$ satisfies axiom $\bf{A.4}$.
\begin{lemma}
\label{BaseCase}
For each pair of positive integers $k$ and $n$ with $k \le n$, there exists $m<\omega$ such that
\begin{equation}
T_{1}^{\star}(m) \rightarrow ( T_{1}^{\star}(n))^{T_{1}^{\star}(k)}.
\end{equation}
\end{lemma}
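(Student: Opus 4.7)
This is a finite Ramsey-type partition theorem; the plan is to reduce it to the classical finite Ramsey theorem, applied iteratively to the layered structure of $T_{1}^{\star}$.

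First, I would analyze the structure of ${T_{1}^{\star}(m) \choose T_{1}^{\star}(k)}$. Since $T_{1}^{\star}(m)$ has a unique level-$1$ node $\langle m\rangle$, any subtree of $T_{1}^{\star}(m)$ isomorphic to $T_{1}^{\star}(k)$ must contain it. Such a copy is then determined by choosing, in lexicographic order, $j^{*}(k)+1$ of the level-$2$ nodes $\langle m, 0\rangle, \langle m, 1\rangle, \ldots, \langle m, j^{*}(m)\rangle$ (where $j^{*}(i)=\max\{j : j(j+1)/2 \le i\}$), and then, beneath each chosen level-$2$ node, selecting a set of children whose size matches the branching of $T_{1}^{\star}(k)$: sizes $1, 2, \ldots, j^{*}(k)$ for the first $j^{*}(k)$ selected nodes, and the appropriate remainder for the last. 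The constraints for such a selection to exist boil down to requiring that the $p$-th chosen level-$2$ index $j_{p}$ satisfies $j_{p} \ge p-1$, which is automatic once the chosen indices are distinct.

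Second, given any $2$-coloring of ${T_{1}^{\star}(m) \choose T_{1}^{\star}(k)}$, I would proceed by an iterated finite Ramsey argument, in the style of Graham--Rothschild. For each fixed selection of level-$2$ nodes, the coloring restricts to a coloring of tuples of leaf selections (one tuple per chosen level-$2$ node), so a finite product Ramsey argument yields a ``monochromatic pattern'' of leaf choices beneath each fixed collection of level-$2$ nodes. An outer application of classical finite Ramsey over the choices of level-$2$ indices then amalgamates these patterns into a single subtree isomorphic to $T_{1}^{\star}(n)$ on which the original coloring is constant.

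The main obstacle is matching the non-full branching count at the last level-$2$ node of $T_{1}^{\star}(n)$. To handle this I would take $m$ of the special form $m = (N+1)(N+2)/2 - 1$, so that every level-$2$ node of $T_{1}^{\star}(m)$ is full (branching counts $1, 2, \ldots, N+1$), choose $N$ sufficiently large via the iterated Ramsey bounds of the previous paragraph, and finally trim the last level-$2$ node of the resulting monochromatic subtree down to the precise remainder prescribed by $T_{1}^{\star}(n)$. Checking that the trimmed subtree retains the correct isomorphism type and lexicographic order, and tallying the iterated Ramsey numbers into a concrete bound on $m$, is then a routine bookkeeping step.
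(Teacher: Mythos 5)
Your structural analysis of ${T_{1}^{\star}(m) \choose T_{1}^{\star}(k)}$ is essentially right, and your overall strategy --- proving the partition relation from scratch by iterated applications of the classical finite Ramsey theorem --- is a genuinely different route from the paper's. The paper does not argue combinatorially at all: it observes that $T_{1}^{\star}(i)$ with its stem removed is isomorphic to a subtree of the cone over an initial approximation $r_{i'}(T_{1})$ of $T_{1}$, pulls the given partition of ${T_{1}^{\star}(m) \choose T_{1}^{\star}(k)}$ back to a partition of ${r_{m}(T_{1}) \choose r_{k'}(T_{1})}$, and invokes the already-known finite Ramsey theorem for $\mathcal{R}_{1}$ (Lemma \ref{finiteRamseyR1}, quoted from Mijares), so the whole proof is a three-line transfer. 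What you propose amounts to re-proving that cited lemma, which is legitimate but is where all the real work lies.

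And that is where your sketch has a genuine gap: the amalgamation step does not close as described. Your inner step produces, \emph{for each fixed selection} $\sigma$ of level-$2$ indices, shrunken leaf sets $B^{\sigma}_{q}$ beneath the chosen nodes on which the colour is constant, and your outer step Ramsey-izes the induced colouring of selections. But to exhibit a monochromatic copy of $T_{1}^{\star}(n)$ you must commit to a \emph{single} leaf set $C_{j}$ beneath each surviving level-$2$ node $j$, and a sub-copy of $T_{1}^{\star}(k)$ of the resulting tree uses a sub-selection $\sigma$ together with leaves drawn from the $C_{j}$'s --- which need not lie inside the sets $B^{\sigma}_{q}$ that witnessed monochromaticity for that particular $\sigma$. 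Since the $B^{\sigma}_{q}$ vary with $\sigma$, and the same physical node occurs in many selections and in different positions $q$ (hence with different required leaf-set sizes), the relevant intersections can be empty. The standard repair is to reverse the order of operations: before selecting any level-$2$ indices, process the level-$2$ nodes one at a time, shrinking each node's leaf set so that the colour of every copy becomes independent of the leaf choice at that node uniformly over all contexts (all positions it can occupy and all leaf choices at the other nodes); only once the colour depends solely on the index selection do you apply the outer finite Ramsey theorem. That one-coordinate-at-a-time pass is the genuinely Graham--Rothschild-style step your plan names but does not actually carry out, and it is also what drives the quantitative bookkeeping (how large the original leaf sets must be to survive the iterated shrinking) that your closing remark treats as routine. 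Your device of taking $m$ with every level-$2$ node full and trimming the last node at the end is fine.
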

\begin{proof} Let $k$ and $n$ be positive integers. For each $i<\omega$, let $i'$ be the smallest natural number such that $T_{1}^{\star}(i)$ is isomorphic to a subtree of $cl(\{ \left< i'\right >^{\frown}s : s\in r_{i'}(T_{1})\})$. By Lemma $\ref{finiteRamseyR1}$ there exists $m<\omega$ such  that $r_{m}(T_{1}) \rightarrow (r_{n'}( T_{1}))^{T_{1}(k')}.$  Suppose that $\{\Pi_{0}, \Pi_{1}\}$ is a partition of ${ T_{1}^{\star}(m) \choose T_{1}^{\star}(k)}$. For each $j<2$, let $\Pi_{j}' = \{ S\in  {r_{m}(T_{1})\choose r_{k'}(T_{1})} : cl(\{\left<m' \right>^{\frown}s: s\in [\hat{S}]\}) \in \Pi_{j}\}$ where $[\hat{S}]$ consists of the lexicographically first $k$ elements of $[S]$. $\{\Pi'_{0}, \Pi'_{1}\}$ forms a partition of ${r_{m}(T_{1}) \choose r_{k'}(T_{1})}$. Hence, there exists $j<2$ and $S\in { r_{m}(T_{1}) \choose  r_{n'}(T_{1}) }$ such that ${S \choose r_{k'}(T_{1})}\subseteq \Pi_{j}'$. If we let $S':=cl(\{\left<m' \right>^{\frown}s: s\in [\hat{S}]\}) \in \Pi_{j}$ where $[\hat{S}]$ consists of the lexicographically first $n$ elements of $[S]$, then $S'\in{T^{\star}_{1}(m) \choose T_{1}^{\star}(n)}$, and ${S' \choose T_{1}^{\star}(k)}\subseteq \Pi_{j}$. Therefore the lemma holds.
\end{proof}

\begin{lemma} For each positive integer $k$,
\label{pigeonhole}
\begin{equation}
T_{1}^{\star} \rightarrow (T^{\star}_{1})^{T_{1}^{\star}(k)}.
\end{equation}
\end{lemma}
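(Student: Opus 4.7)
The plan is to deduce $T_{1}^{\star} \rightarrow (T_{1}^{\star})^{T_{1}^{\star}(k)}$ from Lemma \ref{BaseCase} by iterating it across blocks and then applying an infinite pigeonhole on the two-element set $\{0,1\}$. The key structural observation is that $T_{1}^{\star}(k)$ has a unique node at level~$1$, so every $U \in {T_{1}^{\star} \choose T_{1}^{\star}(k)}$ lies entirely below a single level-$1$ node $\langle m \rangle$ of $T_{1}^{\star}$, i.e., inside some block $T_{1}^{\star}(m)$ with $m \geq k$. Consequently any $2$-coloring $c$ of ${T_{1}^{\star} \choose T_{1}^{\star}(k)}$ decomposes as a family of colorings, one on each ${T_{1}^{\star}(m) \choose T_{1}^{\star}(k)}$.

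Given such a $c$, for each $l \geq k$ I would apply Lemma \ref{BaseCase} to choose $m_{l} < \omega$ with $T_{1}^{\star}(m_{l}) \rightarrow (T_{1}^{\star}(l))^{T_{1}^{\star}(k)}$, selecting these recursively so that $m_{k} < m_{k+1} < \cdots$. The partition relation then yields $S_{l} \in {T_{1}^{\star}(m_{l}) \choose T_{1}^{\star}(l)}$ on which $c$ is constantly equal to some $\gamma_{l} \in \{0,1\}$. By the infinite pigeonhole principle there exist $\gamma \in \{0,1\}$ and an infinite set $L = \{l_{0} < l_{1} < l_{2} < \cdots\} \subseteq \{k, k+1, \ldots\}$ with $\gamma_{l} = \gamma$ for every $l \in L$; in particular, $l_{j} \geq k + j$ for all $j$.

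Next I would assemble $T' \in \mathcal{R}_{1}^{\star}$ block by block. For $i < k$, take the $i$-th block of $T'$ to be $T_{1}^{\star}(i)$ itself (below $\langle i \rangle$ in $T_{1}^{\star}$); for $i \geq k$, take it to be any copy of $T_{1}^{\star}(i)$ inside $S_{l_{i-k}}$. The latter copy exists because $l_{i-k} \geq i$ and the level-$3$ nodes of $T_{1}^{\star}(i)$ embed lex-preservingly into those of $T_{1}^{\star}(l_{i-k})$ via a uniform relabeling of the first coordinate. The roots $0 < 1 < \cdots < k-1 < m_{l_{0}} < m_{l_{1}} < \cdots$ are strictly increasing, so $T' \leq T_{1}^{\star}$ and $T' \cong T_{1}^{\star}$. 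To finish, any $U \in {T' \choose T_{1}^{\star}(k)}$ sits below a single level-$1$ node of $T'$, hence inside one block. A direct count shows that the $i$-th block has only $i+1$ maximal nodes, so no copy of $T_{1}^{\star}(k)$ (which has $k+1$ maximal nodes) can fit in a block indexed $i < k$. Thus $U$ lies inside $S_{l_{i-k}}$ for some $i \geq k$, forcing $c(U) = \gamma$.

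The main obstacle is the structural bookkeeping: guaranteeing simultaneously that the index-shift $i \mapsto l_{i-k}$ is well-defined, that a copy of $T_{1}^{\star}(i)$ fits inside $S_{l_{i-k}}$, and that the resulting $T'$ has the correct isomorphism type. The elementary arithmetic $l_{i-k} \geq k + (i-k) = i$ is precisely the fact that ties all three of these requirements together.
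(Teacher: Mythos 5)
Your proof is correct and takes essentially the same route as the paper's: apply Lemma \ref{BaseCase} to get, for each $l$, a homogeneous copy of $T_1^{\star}(l)$ inside some $T_1^{\star}(m_l)$, pigeonhole the colors to fix one value on infinitely many of these blocks, and assemble them into a member of $\mathcal{R}_1^{\star}$ on which every copy of $T_1^{\star}(k)$ (necessarily contained in a single block) receives that color. The only cosmetic difference is that you construct the diagonal tree $T'$ explicitly and verify the bookkeeping, whereas the paper takes the union $S$ of the homogeneous blocks and then passes to an arbitrary element of ${S \choose T_1^{\star}}$.
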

\begin{proof} Let $k$ be a positive integer. Lemma $\ref{BaseCase}$ shows that there exists a strictly increasing sequence $(m_{i})_{i<\omega}$ such that for each $i<\omega$, $T^{\star}_{1}(m_{i}) \rightarrow (T^{\star}_{1}(i))^{T_{1}^{\star}(k)}$. Let $\{\Pi_{0}, \Pi_{1}\}$ be a partition of ${T_{1}^{\star}\choose T_{1}^{\star}(k)}$ and $(S_{0}, S_{1}, \dots)$ be a sequence of trees such that for each $i<\omega$, $S_{i} \in { T_{1}^{\star} \choose T_{1}^{\star}(i)}$ and ${S_{i} \choose T_{1}^{\star}(k)}$ is contained in one piece of the partition $\{\Pi_{0}, \Pi_{1}\}$. By the pigeonhole principle there exists $j<2$ and a strictly increasing sequence $(i_{0}, i_{1}, \dots)$  such that for all $l<\omega$, $S_{i_{l}} \in { T_{1}^{\star}(m_{i_{l}})\choose T_{1}^{\star}(i)}$ and ${S_{i_{l}} \choose T_{1}^{\star}(k)}\subseteq \Pi_{j}$. Let $S = \bigcup_{l<\omega} S_{i_{l}}$. If $S'$ is any element of ${S \choose T_{1}^{\star}}$ then ${ S' \choose T_{1}^{\star}(k)} \subseteq \Pi_{j}$. Therefore the lemma holds.
\end{proof}
\begin{theorem}
\label{RamseySpace}
$(\mathcal{R}^{\star}_{1}, \le, r)$ satisfies $\bf{A.1}$-$\bf{A.4}$ and forms a topological Ramsey space.
\end{theorem}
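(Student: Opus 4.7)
The plan is to invoke the Abstract Ellentuck Theorem by verifying axioms $\bf{A.1}$-$\bf{A.4}$ for $(\mathcal{R}^{\star}_{1}, \le, r)$. The structural axioms $\bf{A.1}$-$\bf{A.3}$ should follow routinely from the general framework of Section $\ref{section3}$, once one checks that $T_{1}^{\star}$ meets the stated hypotheses: every maximal node has length $3$, and $\pi_{0}''[T_{1}^{\star}] = \{\langle n\rangle : n<\omega\}$, so that the blocks $T_{1}^{\star}(i)$ are well-defined and linearly ordered by their roots.

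More explicitly, $\bf{A.1}$ is immediate from the formula $r(i,S) = \bigcup_{j<i} S(j)$ together with the observation that two distinct copies of $T_{1}^{\star}$ inside $T_{1}^{\star}$ must disagree in some block, while the nesting condition is built into this definition. For $\bf{A.2}$ I would take $a \le_{\mathrm{fin}} b$ to mean that $a$ is isomorphic, via a lex-preserving map, to a subtree of $b$; the three clauses then reduce to finite checks about the finite tree $b$. Axiom $\bf{A.3}$ follows from the observation that if $a \sqsubseteq r_{i}(B)$, then the tail of $B$ past $\mathrm{depth}_{B}(a)$ still contains blocks of every required shape to complete $a$ to a full member of $[a,B]$, and that any $A \in [a,B]$ can be truncated and rebuilt inside $[\mathrm{depth}_{B}(a), B]$.

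The main obstacle is axiom $\bf{A.4}$, the pigeonhole principle. Fix $B \in \mathcal{R}_{1}^{\star}$, $a \in \mathcal{AR}(T_{1}^{\star})$ with $d := \mathrm{depth}_{B}(a) < \infty$, and a partition $\{\mathcal{O}, \mathcal{O}^{c}\}$ of $\mathcal{AR}_{|a|+1}$. Each one-step extension $b \in r_{|a|+1}[a,B]$ is determined by a single new block of shape $T_{1}^{\star}(|a|)$ selected from $B$ lex-above $a$, so the partition naturally induces a 2-coloring of the $T_{1}^{\star}(|a|)$-subtrees of the tail of $B$ past depth $d$. Since this tail carries the structure of a copy of $T_{1}^{\star}$, Lemma $\ref{pigeonhole}$ applied with $k = |a|$ produces a subtree $C$ of this tail, isomorphic to $T_{1}^{\star}$, in which every copy of $T_{1}^{\star}(|a|)$ lies on one fixed side of the partition. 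Pasting $a$ back onto $C$ (with an appropriate re-indexing of the blocks of $C$ so that the resulting tree is genuinely isomorphic to $T_{1}^{\star}$) yields the required $A \in [\mathrm{depth}_{B}(a), B]$ with $r_{|a|+1}[a,A]$ monochromatic. With all four axioms verified, the Abstract Ellentuck Theorem delivers that $(\mathcal{R}^{\star}_{1}, \le, r)$ is a topological Ramsey space.
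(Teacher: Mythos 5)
Your proposal follows the paper's proof essentially verbatim: the paper likewise dispatches \textbf{A.1}--\textbf{A.3} (and the closedness requirement) as routine modifications of the corresponding arguments for $\mathcal{R}_{1}$ in \cite{Ramsey-Class}, and observes that \textbf{A.4} is, by the definition of $\mathcal{R}_{1}^{\star}$, exactly the content of Lemma~\ref{pigeonhole}; your unpacking of why a one-step extension of $a$ amounts to selecting a copy of $T_{1}^{\star}(|a|)$ from the tail of $B$ past $\mathrm{depth}_{B}(a)$ is the correct justification of that equivalence. The only hypothesis of the abstract Ellentuck theorem your outline omits is that $\mathcal{R}_{1}^{\star}$ must also be a \emph{closed} subspace of $(\mathcal{AR}(T_{1}^{\star}))^{\omega}$; this is a routine check (handled in the paper alongside \textbf{A.1}--\textbf{A.3}) that you should record before invoking the theorem.
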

\begin{proof} By the abstract Ellentuck theorem it is enough to show that $(\mathcal{R}^{\star}_{1}, \le, r)$ satisfies $\bf{A.1}$-$\bf{A.4}$ and forms  a closed subspace of $(\mathcal{AR}_{1})^{\omega}$. The proof that $\mathcal{R}^{\star}_{1}$ is a closed subspace of $(\mathcal{AR}_{1})^{\omega}$ and satisfies axioms $\bf{A.1}$-$\bf{A.3}$ follows by trivial modifications to the proofs of the same facts for the space $\mathcal{R}_{1}$ in \cite{Ramsey-Class}. For this reason, we omit the proof that $\mathcal{R}^{\star}_{1}$ forms a closed subspace of $(\mathcal{AR}_{1})^{\omega}$ and satisfies axioms $\bf{A.1}-\bf{A.3}$. By definition of $\mathcal{R}_{1}^{\star}$, $\bf{A.4}$ is equivalent to Lemma $\ref{pigeonhole}$. Hence $\bf{A.4}$ holds for $\mathcal{R}_{1}^{\star}$. By the abstract Ellentuck theorem, $\mathcal{R}^{\star}_{1}$ forms a topological Ramsey space.
\end{proof}

\begin{lemma}
\label{sigma-closed}
For each sequence  $S_{0} \reflectbox{$\le^{*}$} S_{1}  \reflectbox{$\le^{*}$} S_{2} \dots$ of elements of $\mathcal{R}^{\star}_{1}$ there exists $S\in\mathcal{R}_{1}^{\star}$ such that for all $i<\omega$, $S\setminus r_{i}(S) \subseteq S_{i}$.
\end{lemma}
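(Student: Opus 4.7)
The plan is a standard fusion argument, building $S$ one block at a time so that beyond its $i$-th block everything lies inside $S_{i}$. The preliminary observation is that $\le^{*}$ is transitive, so $S_{i}\le^{*}S_{j}$ for every $j\le i$; in particular, for each $i$ we may fix $D_{i}<\omega$ such that every block $S_{i}(k)$ with $k\ge D_{i}$ is contained in the intersection $\bigcap_{j\le i} S_{j}$. This gives us, for each $i$, an unbounded supply of blocks of $S_{i}$ that simultaneously refine all earlier $S_{j}$.

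We then construct $S(0),S(1),\ldots$ by induction maintaining three conditions: (i) $S(i)\cong T_{1}^{\star}(i)$, (ii) $S(i)\subseteq S_{j}$ for every $j\le i$, and (iii) $S(i)$ lies lexicographically above all earlier blocks of $S$. At stage $i$, having chosen $S(0),\ldots,S(i-1)$, pick $k\ge\max(D_{i},i)$ large enough that $S_{i}(k)$ is lex-above the previous blocks. Since $k\ge i$, the finite tree $T_{1}^{\star}(k)$ contains a lex-order-preserving isomorphic copy of $T_{1}^{\star}(i)$: the number of second-level children of the root of $T_{1}^{\star}(k)$ dominates that of $T_{1}^{\star}(i)$, and the leaf-count under each corresponding child in $T_{1}^{\star}(k)$ dominates the leaf-count in $T_{1}^{\star}(i)$, so matching the first few level-$2$ children and selecting enough leaves under each produces the desired subtree. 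Transferring this embedding through the isomorphism $T_{1}^{\star}(k)\cong S_{i}(k)$ gives the block $S(i)\subseteq S_{i}(k)\subseteq \bigcap_{j\le i}S_{j}$, verifying (i)--(iii).

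Setting $S:=\bigcup_{i<\omega}S(i)$, condition (i) together with (iii) shows that the blocks of $S$ form a lex-increasing sequence isomorphic in turn to $T_{1}^{\star}(0),T_{1}^{\star}(1),\ldots$, so $S\in\mathcal{R}_{1}^{\star}$. For any $i<\omega$, condition (ii) gives $S(j)\subseteq S_{i}$ for every $j\ge i$, hence $S\setminus r_{i}(S)=\bigcup_{j\ge i}S(j)\subseteq S_{i}$, which is what the lemma asks for.

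The only step that is not a purely abstract fusion is the explicit embedding $T_{1}^{\star}(i)\hookrightarrow T_{1}^{\star}(k)$ used at each stage; this is the main obstacle, but it reduces to a short combinatorial bookkeeping check directly from the definition of $T_{1}^{\star}(k)$. Everything else is formal and will carry over verbatim to the analogous $\sigma$-closedness statements in Sections~\ref{section5} and~\ref{section6}, provided the same domination property for the constituent finite trees is verified.
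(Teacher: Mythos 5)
Your proposal is correct and is essentially the paper's own argument: a fusion in which the $i$-th block of $S$ is chosen as a copy of $T_{1}^{\star}(i)$ inside a sufficiently late tail of the $i$-th term of the sequence (the paper takes it from $S_{i+1}\setminus r_{k_{i}}(S_{i+1})$, you from a late block of $S_{i}$, which is the same idea). The only difference is that you spell out the embedding $T_{1}^{\star}(i)\hookrightarrow T_{1}^{\star}(k)$ for $k\ge i$, which the paper leaves implicit in asserting that the relevant set of copies is nonempty.
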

\begin{proof}
Let $S_{0}\reflectbox{$\le^{*}$} S_{1}\reflectbox{$\le^{*}$} S_{2} \dots$ be a decreasing sequence in $\mathcal{R}_{1}^{\star}$. Hence, there is a strictly increasing sequence $(k_{i})_{i<\omega}$ of natural numbers such that for all $i<\omega$ and all $j<i$, $S_{i+1}\setminus r_{k_{i}}(S_{i+1}) \subseteq S_{j}$. For each $i<\omega$, let $S(i)$ be an element of ${S_{i+1}\setminus r(k_{i},S_{i+1}) \choose T^{\star}(i)}.$ Let $S= \bigcup_{i<\omega} S(i)$. Then $S\in \mathcal{R}_{1}^{\star}$ and for all $i<\omega,$ $S\setminus r_{i}(S) \subseteq S_{i}$.
\end{proof}
Next we define maps $\gamma$ and $\Gamma$ that will be used to transfer an ultrafilter on $[T_{1}^{\star}]$ generated by a subset of $\mathcal{R}^{\star}_{1}$ to an ultrafilter on $[T_{1}]$ generated by a subset of $\mathcal{R}_{1}$.
\begin{definition}
Let $\{t_{0},t_{1},t_{2}, \dots\}$ and $\{s_{0},s_{1},s_{2}, \dots\}$ be the lexicographically increasing enumeration of $[T_{1}]$ and $[T_{1}^{\star}]$, respectively. Let $\gamma:[T_{1}^{\star}]\rightarrow [T_{1}]$ such that for all $i<\omega$,
\begin{equation}
 \gamma(s_{i})=t_{i}.
\end{equation}
 Let $\Gamma: \mathcal{R}_{1}^{\star}\rightarrow \mathcal{R}_{1}$ be the map given by 
\begin{equation}
\Gamma(S) = cl( \gamma''[S]).
\end{equation}
\end{definition}
\begin{remark}
$\gamma$ is bijective and $\Gamma$ is injective but not surjective. 
\end{remark}

\begin{theorem}
\label{ultrafilter}
 $(\mathcal{R}_{1}^{\star}, \le^{*})$ is $\sigma$-closed, and if $\mathcal{G}$ is a generic filter for $(\mathcal{R}_{1}^{\star}, \le^{*})$ over some ground model $V$, then $\Gamma''\mathcal{G}$ generates an ultrafilter on $[T_{1}]$ that is selective for $\mathcal{R}_{1}$ but not Ramsey for $\mathcal{R}_{1}$ in $V[\mathcal{G}]$.
\end{theorem}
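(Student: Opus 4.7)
The proof splits into four claims, and I would address them in the following order: (i) $(\mathcal{R}_1^\star,\le^*)$ is $\sigma$-closed, (ii) $\Gamma''\mathcal{G}$ generates an ultrafilter $\mathcal{U}$ on $[T_1]$, (iii) $\mathcal{U}$ is selective for $\mathcal{R}_1$, and (iv) $\mathcal{U}$ is not Ramsey for $\mathcal{R}_1$. Claims (i) and (ii) are comparatively routine. For (i), Lemma~\ref{sigma-closed} directly produces, from any $\le^*$-decreasing sequence, a fusion $S$ with $S\setminus r_i(S)\subseteq S_i$ for every $i$, and such an $S$ is in particular $\le^*$ every $S_i$. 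For (ii), I would first note that $\Gamma$ is both $\le$-preserving and $\le$-reflecting (since all maximal nodes of $T_1^\star$ have length $3$, whence $S=cl([S])$, and $\gamma$ is a bijection), so $\Gamma''\mathcal{G}$ is downward directed and generates a filter $\mathcal{U}$. Then, for each $A\subseteq[T_1]$ in $V$ (which by $\sigma$-closure may be assumed to lie in $V$), the Ramsey property of $\mathcal{R}_1^\star$ granted by Theorem~\ref{RamseySpace} makes the set $\{S\in\mathcal{R}_1^\star:[S]\subseteq\gamma^{-1}(A)\text{ or }[S]\cap\gamma^{-1}(A)=\emptyset\}$ dense in $(\mathcal{R}_1^\star,\le^*)$; genericity then provides $S\in\mathcal{G}$ placing $A$ or $[T_1]\setminus A$ into $\mathcal{U}$.

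For (iii), given any decreasing sequence $\Gamma(S_0)\ge\Gamma(S_1)\ge\ldots$ in $\Gamma''\mathcal{G}$, the reflection of $\le$ by $\Gamma$ yields $S_0\ge S_1\ge\ldots$ in $\mathcal{G}$; Lemma~\ref{sigma-closed} combined with genericity then produces $S\in\mathcal{G}$ satisfying $S\setminus r_i(S)\subseteq S_i$ for every $i$. The key observation is that $\gamma$ bijects the $j$th first-level branch of $T_1^\star$ onto the $j$th first-level branch of $T_1$ in lex order (both branches have exactly $j+1$ maximal nodes), so $\Gamma$ intertwines the approximation maps: $\Gamma(r_i(S))=r_i(\Gamma(S))$, giving $\Gamma(S)\setminus r_i(\Gamma(S))\subseteq\Gamma(S_i)$ for every $i$ and hence selectivity of $\mathcal{U}$ for $\mathcal{R}_1$.

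The hard part will be (iv), and my plan is to exhibit in $V$ an explicit $2$-partition of ${T_1\choose r_2(T_1)}$ that no $\Gamma(S)$ with $S\in\mathcal{R}_1^\star$ can make homogeneous. For each maximal node $m\in[T_1]$ let $j'(m)$ denote the middle coordinate of $\gamma^{-1}(m)\in[T_1^\star]$, and place a copy of $r_2(T_1)=T_1(0)\cup T_1(1)$ with maximal nodes $\{\langle i_0,j_0\rangle\}\cup\{\langle i_1,j_{1,0}\rangle,\langle i_1,j_{1,1}\rangle\}$ (with $i_0<i_1$) into $\Pi_0$ iff $j'(\langle i_1,j_{1,0}\rangle)=j'(\langle i_1,j_{1,1}\rangle)$, and into $\Pi_1$ otherwise. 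Fix any $S\in\mathcal{R}_1^\star$. The isomorphism $S(1)\cong T_1^\star(1)$ forces $S(1)$ to have two level-$2$ nodes each carrying a single maximal node (since $T_1^\star(1)$ has level-$2$ degree sequence $(1,1)$), so the two maximal nodes of $\Gamma(S)(1)$ carry distinct middle coordinates, and together with the unique maximal node of $\Gamma(S)(0)$ they form a copy in $\Pi_1$. The isomorphism $S(2)\cong T_1^\star(2)$ forces $S(2)$ to have exactly two level-$2$ nodes with degree sequence $(1,2)$ at level $3$, so two of the three maximal nodes of $\Gamma(S)(2)$ must share a middle coordinate; pairing that pair with the unique maximal node of $\Gamma(S)(0)$ yields a copy in $\Pi_0$. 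Thus ${\Gamma(S)\choose r_2(T_1)}$ meets both $\Pi_0$ and $\Pi_1$ for every $S\in\mathcal{R}_1^\star$, so no $\Gamma(S)\in\Gamma''\mathcal{G}$ is homogeneous and $\mathcal{U}$ fails Ramseyness for $\mathcal{R}_1$. The subtle point is ensuring that the lex-preserving bijection giving membership in $\mathcal{R}_1^\star={T_1^\star\choose T_1^\star}$ really transports the level-$2$ degree sequence; this follows because all maximal nodes of $T_1^\star$ share length $3$, which forces any such bijection to preserve depth and hence tree structure, so the finitary combinatorial facts about $T_1^\star(1)$ and $T_1^\star(2)$ transfer to every $S(1)$ and $S(2)$.
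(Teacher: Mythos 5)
Your proposal is correct and follows essentially the same route as the paper's proof: $\sigma$-closure via Lemma~\ref{sigma-closed}, density-plus-genericity arguments for the ultrafilter and selectivity claims, and the very same partition of ${T_{1} \choose r_{2}(T_{1})}$ witnessing the failure of Ramseyness (your criterion --- equality of the middle coordinates of $\gamma^{-1}$ of the two maximal nodes in the second block --- coincides with the paper's criterion that the longest common initial segment of those preimages has length $2$ rather than $1$, and you in fact supply more detail than the paper on why both classes are realized inside every $\Gamma(S)$, via the degree sequences $(1,1)$ of $T_{1}^{\star}(1)$ and $(1,2)$ of $T_{1}^{\star}(2)$). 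The one place you are terser than the paper is the selectivity step, where ``Lemma~\ref{sigma-closed} combined with genericity'' implicitly requires exhibiting a dense set lying in $V$; the paper does this by adjoining to the set of fusions the conditions meeting some $S_{i}$ only in the root and then discarding that part because $\mathcal{G}$ is a filter containing every $S_{i}$ --- a routine completion of exactly the argument you sketch.
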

\begin{proof}

By Lemma $\ref{sigma-closed}$, $(\mathcal{R}^{\star}_{1},\le^{*})$ is a $\sigma$-closed partial order. Suppose that $\mathcal{G}$ is a generic filter for $(\mathcal{R}^{\star}_{1},\le^{*})$ and $X \subseteq [T_{1}]$. By the previous remarks, $X$ is in the ground model $V$. Since $[T_{1}]$ is in bijective correspondence with ${T^{\star}_{1} \choose T^{\star}_{1}(0)}$, Lemma $\ref{pigeonhole}$ shows that for each $T\in\mathcal{R}_{1}^{\star}$ there exists $S\in{ T \choose T^{\star}_{1}}$ such that either $\gamma''[S] \subseteq X$ or $\gamma''[S] \cap X = \emptyset$. Hence,
\begin{equation}
\Delta_{X} = \{ S \in \mathcal{R}^{\star}_{1}: [\Gamma(S)] \subseteq X \mbox{ or } [\Gamma(S)] \cap X = \emptyset\}
\end{equation} 
is dense in $(\mathcal{R}^{\star}_{1},\le^{*})$. Since $\mathcal{G}$ is generic, for each $X\subseteq[T_{1}]$, $\mathcal{G} \cap \Delta_{X} \not= \emptyset$. In particular, for each $X\subseteq [T_{1}]$ there exists $S\in \mathcal{G}$ such that $[\gamma(S)]\subseteq X$ or $[\gamma(S)]\cap X=\emptyset$. Therefore $\Gamma''\mathcal{G}$ generates an ultrafilter on $[T_{1}]$.  Let $\mathcal{V}_{1}$ denote the ultrafilter on $[T_{1}]$ generated by $\Gamma''\mathcal{G}$.

 Let $(S_{0}, S_{1}, \dots)$ be a sequence of elements of $\mathcal{R}_{1}$ such that $\Gamma(S_{0}) \ge \Gamma(S_{1})  \ge  \Gamma(S_{2}) \ge \dots$ is a decreasing sequence. Since $(\mathcal{R}_{1 },\le^{*})$ is $\sigma$-closed,
\begin{equation}
\Delta_{(S_{0}, S_{1}, \dots)}= \bigcup_{i<\omega} \{ S\in \mathcal{R}_{1}^{\star}: S \cap S_{i} = \{\left < \ \right> \} \}\cup \bigcap_{i<\omega} \{ S\in \mathcal{R}_{1}^{\star}: S\setminus r_{i}(S) \subseteq S_{i}\}
\end{equation}
is in the ground model $V$. 

Next we show that $\Delta_{(S_{0}, S_{1}, \dots)}$ is dense in $(\mathcal{R}^{\star}_{1},\le^{*})$. To this end, suppose that $T\in\mathcal{R}_{1}^{\star}$. Since $\mathcal{R}_{1}^{\star}$ forms a topological Ramsey space, either there exists $S'\le T$  and $i<\omega$ such that $S_{i} \cap S' = \{ \left< \ \right>\}$ or there exists a sequence $(S'_{0}, S'_{1}, \dots)$ in $[\emptyset, T]$ such that for each $i<\omega$, $S'_{i} \le S_{i}$. In the first case, $S' \le^{*} T$ and $S' \in \Delta_{(S_{0}, S_{1}, \dots)}$. In the second case, Lemma \ref{sigma-closed} shows that there exists $S\le T$ such that $ S \setminus r_{i}(S) \subseteq S'_{i} \subseteq S_{i}$. In particular, $S\in \Delta_{(S_{0}, S_{1}, \dots)}$. In both cases, there exists $S\le T$ such that $S\in \Delta_{(S_{0}, S_{1}, \dots)}$. Therefore $\Delta_{(S_{0}, S_{1}, \dots)}$ is dense  in $(\mathcal{R}^{\star}_{1},\le^{*})$.

Suppose that the sequence $\Gamma(S_{0}) \ge \Gamma(S_{1})  \ge  \Gamma(S_{2}) \dots$ consists of members $\Gamma''\mathcal{G}$. Then $\mathcal{G} \cap  \Delta_{(S_{0}, S_{1}, \dots)} \not= \emptyset$ shows that there exists $S\in \mathcal{G}$ such that for all $i<\omega$, $\Gamma(S) \setminus r_{i}(\Gamma(S)) \subseteq \Gamma(S_{i})$. Therefore $\mathcal{V}_{1}$ is a selective for $\mathcal{R}_{1}$ ultrafilter on $[T_{1}]$.

Next we construct a partition of ${ T_{1} \choose r_{2}(T_{1})}$ and show that the partition witnesses that $\mathcal{V}_{1}$ is not Ramsey for $\mathcal{R}_{1}$. For each $s \in{ T_{1} \choose r_{2}(T_{1})}$, we let $s'$ and $s''$ denote the two lexicographically smallest elements of $[s\setminus  r_{1}(s)]$. Notice that for each $s\in{ T_{1} \choose r_{2}(T_{1})}$ the length of the longest common initial segment of $\gamma^{-1}(s')$ and $\gamma^{-1}(s'')$ is either $1$ or $2$. For each $j<2$, let $\Pi_{j}$ denote the set of all $s\in{ T_{1} \choose r_{2}(T_{1})}$ such that length of the longest common initial segment of $\gamma^{-1}(s')$ and $\gamma^{-1}(s'')$ is $1-j$. For each $S\in \Gamma''\mathcal{G}$, ${ S\choose r_{2}(T_{1})}$ is neither a subset of $\Pi_{0}$ nor $\Pi_{1}$. Therefore $\mathcal{V}_{1}$ is not a Ramsey for $\mathcal{R}_{1}$ ultrafilter on $[T_{1}]$.
\end{proof}

\section{Selective but not Ramsey for $\mathcal{R}_{n}$}
\label{section5}

In this section, we investigate the triples $(\mathcal{R}_{n}, \le, r)$ for $n<\omega$. These spaces were first defined by Dobrinen and Todorcevic in \cite{Ramsey-Class2}. The construction of $\mathcal{R}_{n}$ in \cite{Ramsey-Class} was motivated by the work of Laflamme in \cite{Laflamme} which uses forcing to adjoin a $(n+1)$-Ramsey ultrafilter having exactly $n$ Rudin-Keisler predecessors, a linearly ordered chain of p-points.
The purpose of this section is to introduce a closely related triple $(\mathcal{R}^{\star}_{n}, \le, r)$ and show that forcing with $\mathcal{R}^{\star}_{n}$ using almost-reduction, adjoins an ultrafilter that is selective but not Ramsey for $\mathcal{R}_{n}$.

\begin{definition}[$(\mathcal{R}_{n}, \le, r)$] Assume $n$ is a positive integer and $T_{1}, T_{2}, \dots, T_{n}$ have been defined. For each $i<\omega$, let
\begin{equation}
T_{n+1}(i) =\left  \{ \left< \ \right>, \left < i \right >, \left< i\right >^{\frown}s: s\in T_{n}(j) \ \& \  \frac{i(i+1)}{2}\le j <\frac{(i+1)(i+2)}{2} \right\}.
\end{equation}
Let $T_{n+1} = \bigcup_{i<\omega} T_{n+1}(i)$ and $(\mathcal{R}_{n+1},\le,r)$ denote the triple $(\mathcal{R}(T_{n+1}), \le, r)$. Figure $\ref{graphT2}$ includes a graph of the tree $T_{2}$.
\end{definition}
\begin{figure}[h!]
\includegraphics{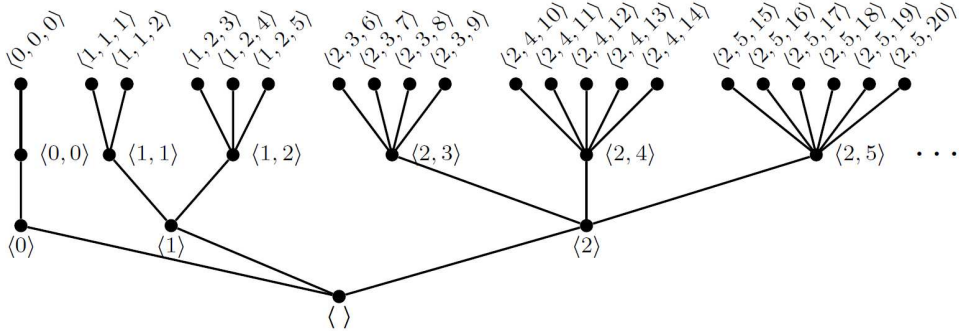}
\caption{Graph of $T_{2}$}
\label{graphT2}
\end{figure}

The next result is Theorem 3.23 of Dobrinen and Todorcevic in \cite{Ramsey-Class2} for $\alpha<\omega$.
\begin{theorem}[\cite{Ramsey-Class2}]
For each positive integer $n$, $(\mathcal{R}_{n}, \le, r)$  satisfies axioms $\bf{A.1}$-$\bf{A.4}$ and forms a topological Ramsey space.
\end{theorem}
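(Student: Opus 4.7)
The natural approach is induction on $n$, with base case $n=1$ being the previously stated theorem of Dobrinen--Todorcevic. For the inductive step, assume $(\mathcal{R}_{n}, \le, r)$ satisfies $\bf{A.1}$--$\bf{A.4}$ and is closed in $(\mathcal{AR}_n)^\omega$, and show the same for $(\mathcal{R}_{n+1}, \le, r)$. Closedness in $(\mathcal{AR}_{n+1})^\omega$ is immediate from the block decomposition $S = \bigcup_{i<\omega} S(i)$. Axioms $\bf{A.1}$--$\bf{A.3}$ are structural: (a)--(c) of $\bf{A.1}$ are forced by the definition of $r$ as a union of lexicographically ordered blocks; the quasi-order $\le_{\mathrm{fin}}$ on $\mathcal{AR}(T_{n+1})$ can be taken to be subtree-containment, and $\bf{A.2}$(a)--(c) and $\bf{A.3}$(a)--(b) follow by essentially verbatim translations of the arguments establishing them for $\mathcal{R}_1$ in \cite{Ramsey-Class}, using the fact that once $\mathrm{depth}_B(a) < \infty$ one can extend $a$ within any $A \le B$ past that depth by inserting further appropriate blocks.

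The essential content is axiom $\bf{A.4}$, the pigeonhole. The strategy parallels Lemmas $\ref{BaseCase}$ and $\ref{pigeonhole}$: first prove a finite partition theorem
\begin{equation}
r_N(T_{n+1}) \rightarrow (r_m(T_{n+1}))^{r_k(T_{n+1})}
\end{equation}
for each $k \le m$ and some sufficiently large $N = N(k,m,n)$, then lift to $\bf{A.4}$ by the same fusion-plus-pigeonhole argument used to deduce Lemma $\ref{pigeonhole}$ from Lemma $\ref{BaseCase}$: choose a strictly increasing sequence $(m_i)_{i<\omega}$ such that $r_{m_i}(T_{n+1}) \to (r_i(T_{n+1}))^{r_k(T_{n+1})}$, iterate inside the given $A$, and extract a monochromatic infinite subtree by the pigeonhole principle. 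The finite Ramsey theorem is obtained by combining the inductive hypothesis on $T_n$--which gives the corresponding finite partition theorem for $\mathcal{R}_n$ via Mijares' theorem (see Lemma $\ref{finiteRamseyR1}$ and \cite{MijaresGalvin})--with a Hales--Jewett style product Ramsey argument to handle the top-level branching of $T_{n+1}$.

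The hard part is proving the finite partition theorem for $T_{n+1}$. Each block $T_{n+1}(i)$ consists of the root $\langle i \rangle$ together with the copies of $T_n(j)$ for $j$ ranging over an interval whose length grows with $i$, so a copy of $r_k(T_{n+1})$ inside $r_N(T_{n+1})$ is determined both by a choice of top-level branch indices and by nested embeddings of $T_n$-blocks of varying sizes. The bookkeeping amounts to a two-stage argument: first apply the inductive finite Ramsey theorem on $T_n$ sufficiently many times to make each embedded $T_n$-block monochromatic (after passing to a deep enough initial segment), then apply a product/Hales--Jewett argument to the resulting induced coloring on the top-level branching in order to find a single sub-block $r_m(T_{n+1})$ of $r_N(T_{n+1})$ all of whose $r_k(T_{n+1})$-sub-approximations land in the same piece. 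The parameters in the two stages must be chosen carefully so that they amalgamate, and this is where the specific growth rate $i(i+1)/2 \le j < (i+1)(i+2)/2$ in the definition of $T_{n+1}(i)$ plays its role: it provides enough flexibility so that each block $T_{n+1}(i)$ can be embedded into a block $T_{n+1}(i')$ for every sufficiently large $i'$, which is precisely what makes the amalgamation possible.
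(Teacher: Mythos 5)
This theorem is not proved in the paper at all: it is quoted as Theorem 3.23 of Dobrinen and Todorcevic in \cite{Ramsey-Class2} (for $\alpha<\omega$), so there is no in-paper proof to compare against line by line. The closest thing the paper contains is its proof of the analogous statement for the starred spaces, namely Lemma \ref{induction} and Theorem \ref{RamseySpaces}, and your outline matches that argument in all essentials: induction on $n$, with \textbf{A.1}--\textbf{A.3} and closedness handled structurally, and \textbf{A.4} obtained by first proving a finite block partition relation from the inductive hypothesis via Mijares' theorem (as in Lemma \ref{finiteRamseyR1}) and then lifting to the infinite statement by the fusion-plus-pigeonhole argument of Lemma \ref{pigeonhole}. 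The one place where you diverge, and where your account overcomplicates matters, is the ``Hales--Jewett style product Ramsey argument to handle the top-level branching of $T_{n+1}$.'' No such argument is needed: each block $T_{n+1}(i)$ has a \emph{unique} node $\left<i\right>$ at level one, so every subtree of $T_{n+1}$ isomorphic to $T_{n+1}(k)$ lies entirely inside a single block $T_{n+1}(i)$, and stripping the common stem $\left<i\right>$ identifies it with a tail segment of an approximation of $T_{n}$ (a union of $k+1$ consecutive $T_{n}$-blocks). A coloring of ${T_{n+1}(m)\choose T_{n+1}(k)}$ therefore transfers directly to a coloring of ${r_{m'}(T_{n})\choose r_{(k+1)(k+2)/2}(T_{n})}$ by coloring each copy according to its last $k+1$ blocks, exactly as in the displayed definition of $\overline{\Pi}_{0}$ in Lemma \ref{induction}, and Mijares' finite Ramsey theorem for $\mathcal{R}_{n}$ finishes the job with no product argument and no delicate amalgamation of parameters. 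Your closing observation about the growth rate $i(i+1)/2\le j<(i+1)(i+2)/2$ is the right intuition for why a block of $T_{n+1}$ embeds into all sufficiently large blocks, but the ``two-stage bookkeeping'' you describe as the hard part dissolves once the single-stem observation is made.
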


\begin{definition}[$(\mathcal{R}^{\star}_{n}, \le , r)$] Assume $n$ is a positive integer and $T^{\star}_{1}, T^{\star}_{2},\dots, T^{\star}_{n}$ have been defined.  For each $i<\omega$, let
\begin{equation}
T_{n+1}^{\star}(i)=\left  \{ \left< \ \right>, \left < i \right >, \left< i\right >^{\frown}s: s\in T^{\star}_{n}(j)\ \& \ \frac{i(i+1)}{2}\le j <\frac{(i+1)(i+2)}{2} \right\}.
\end{equation}
Let $T_{n+1}^{\star}= \bigcup_{i<\omega} T_{n+1}^{\star}(i)$ and $(\mathcal{R}^{\star}_{n+1},\le,r)$ denote the triple $(\mathcal{R}(T^{\star}_{n+1}), \le, r)$.  
\end{definition}
The next lemma isolates the argument need to show by induction that for each positive integer $n$, $\mathcal{R}_{n}^{\star}$ satisfies $\bf{A.1}$-$\bf{A.4}$ and forms a topological Ramsey space.

\begin{lemma} \label{induction} For each positive integer $l$, if $\mathcal{R}^{\star}_{l}$ forms a topological Ramsey space then $\mathcal{R}^{\star}_{l+1}$ satisfies $\bf{A.4}$.
\end{lemma}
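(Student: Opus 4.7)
The plan is to imitate the template of the base case $l=1$ (Lemmas \ref{BaseCase}, \ref{pigeonhole}, and the final reduction in Theorem \ref{RamseySpace}). The task reduces to establishing the analogue of Lemma \ref{pigeonhole}:
\begin{equation}
T^{\star}_{l+1} \rightarrow (T^{\star}_{l+1})^{T^{\star}_{l+1}(k)}
\end{equation}
for every positive integer $k$. Indeed, by the definition of $\mathcal{R}^{\star}_{l+1}$, this partition relation is equivalent to $\bf{A.4}$ for $\mathcal{R}^{\star}_{l+1}$, precisely as in the final line of the proof of Theorem \ref{RamseySpace}. So the substantive work is to prove this partition relation under the hypothesis that $\mathcal{R}^{\star}_{l}$ is a topological Ramsey space.

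First I would prove a finite analogue of Lemma \ref{BaseCase}: for each pair of positive integers $k\le n$ there exists $m$ with $T^{\star}_{l+1}(m) \to (T^{\star}_{l+1}(n))^{T^{\star}_{l+1}(k)}$. The key observation is structural: any copy $S$ of $T^{\star}_{l+1}(n)$ inside $T^{\star}_{l+1}(m)$ must share the common prefix $\langle\rangle, \langle m\rangle$, so stripping off this prefix identifies $S$ with a subtree of the ``slab'' $\bigcup_{j\in I_{m}} T^{\star}_{l}(j)$ (where $I_{m} = [m(m+1)/2,(m+1)(m+2)/2)$) that is lex-order isomorphic to $\bigcup_{j'\in I_{n}} T^{\star}_{l}(j')$, and similarly for copies of $T^{\star}_{l+1}(k)$. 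Both slabs embed naturally into an initial segment $r_{N}(T^{\star}_{l})$ for $N$ large enough, and the relevant isomorphism-preserving subtree counts match up. Since $\mathcal{R}^{\star}_{l}$ forms a topological Ramsey space by hypothesis, the abstract Ellentuck theorem combined with the compactness argument used in Lemma \ref{finiteRamseyR1} (that is, Mijares' Theorem 3.5 in \cite{MijaresGalvin} applied to $\mathcal{R}^{\star}_{l}$) yields a finite Ramsey theorem for finite approximations of $T^{\star}_{l}$. Pulling back a two-coloring of ${T^{\star}_{l+1}(m)\choose T^{\star}_{l+1}(k)}$ through the slab correspondence, applying this finite $T^{\star}_{l}$-level Ramsey theorem (with $m$ chosen sufficiently large relative to the parameters determined by $n$ and $k$), and pushing the monochromatic copy back into $T^{\star}_{l+1}(m)$ gives the desired monochromatic $T^{\star}_{l+1}(n)$.

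With the finite partition theorem in hand, the infinite-dimensional partition relation is obtained by a fusion-plus-pigeonhole argument identical to the proof of Lemma \ref{pigeonhole}: iteratively choose $S_{i}\in {T^{\star}_{l+1}\choose T^{\star}_{l+1}(i)}$ with ${S_{i}\choose T^{\star}_{l+1}(k)}$ contained in a single piece of the partition, then extract a strictly increasing subsequence $(i_{l})$ on which the assigned color is constant, and take the union. The main obstacle in this plan is the bookkeeping for the finite reduction: selecting the correct parameters $N$ and choosing the slab-to-initial-segment embedding of $T^{\star}_{l}$ so that the lexicographic-order-preserving requirement on isomorphisms between $T^{\star}_{l+1}$-copies translates cleanly into a lex-order condition on the corresponding finite subtrees of $r_{N}(T^{\star}_{l})$, ensuring that the pullback of any two-coloring is a well-defined two-coloring of a class of subtrees for which the $T^{\star}_{l}$-level finite Ramsey theorem actually applies.
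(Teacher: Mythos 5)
Your proposal follows essentially the same route as the paper: reduce $\bf{A.4}$ to the partition relation $T^{\star}_{l+1}\rightarrow(T^{\star}_{l+1})^{T^{\star}_{l+1}(k)}$, prove the finite version $T^{\star}_{l+1}(m)\rightarrow(T^{\star}_{l+1}(n))^{T^{\star}_{l+1}(k)}$ by stripping the common prefix and transferring the coloring to finite approximations of $T^{\star}_{l}$ via Mijares' Theorem 3.5 applied to $\mathcal{R}^{\star}_{l}$ (this is exactly where the inductive hypothesis enters in the paper as well, with the slabs realized as $r_{(n+1)(n+2)/2}(T^{\star}_{l})\setminus r_{n(n+1)/2}(T^{\star}_{l})$), and then pass to the infinite relation by the same pigeonhole-and-union argument as in Lemma \ref{pigeonhole}. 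The bookkeeping you flag as the main obstacle is handled in the paper precisely as you outline, so the proposal is correct and matches the paper's proof.
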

\begin{proof} Suppose that $l$ is a positive integer and $\mathcal{R}^{\star}_{l}$ forms a topological Ramsey space. By of Theorem 3.5 of Mijares in \cite{MijaresGalvin} applied to $\mathcal{R}^{\star}_{l}$, we find that for each pair of integers $k$ and $n$ with $k\le n$ there exists $m<\omega$ such that
\begin{equation}
\label{induction step}
r_{m}(T_{l}^{\star}) \rightarrow ( r_{(n+1)(n+2)/2}(T_{l}^{\star}))^{r_{(k+1)(k+2)/2}(T_{l}^{\star})}.
\end{equation}

  Next we prove a partition relation needed to show that $\bf{A.4}$ holds. Suppose that $\{ \Pi_{0}, \Pi_{1}\}$ is a partition of ${T_{l+1}^{\star}(m) \choose T_{l+1}^{\star}(k)}$. Let $m'$ be the unique integer such that $m'(m'+1) \le 2m < (m'+1)(m'+2)$.   Define a new partition $\{\overline{\Pi}_{0}, \overline{\Pi}_{1}\}$ of ${r_{m}(T^{\star}_{l})\choose r_{(k+1)(k+2)/2}(T^{\star}_{l})}$ by letting,
\begin{equation}
\overline{\Pi}_{0} = \left \{ S \in {r_{m}(T^{\star}_{l})\choose r_{(k+1)(k+2)/2}(T^{\star}_{l})} : cl( \{ \left< m' \right>^{\frown}s: s \in S \setminus r_{k(k+1)}(S) \}) \in \Pi_{0} \right \}
\end{equation}
and $\overline{\Pi}_{1}$ be its complement. By equation $(\ref{induction step})$ there exists $S \in{ r_{m}(T_{n}^{\star}) \choose r_{(n+1)(n+2)/2}(T_{l}^{\star})}$ and $j<2$ such that ${S \choose r_{(k+1)(k+2)/2}(T^{\star}_{l})}\subseteq \overline{\Pi}_{j}$.

If we let  $U = cl( \{ \left<m'\right>^{\frown} s : s \in  S \setminus r_{n(n+1)/2}(S)\})$ then $ U \in { T_{l+1}^{\star}(m) \choose T_{l+1}^{\star}(n)}$ and ${U \choose T_{l+1}^{\star}(k)}\subseteq \Pi_{j}$. Therefore for each pair of positive integers $k$ and $n$ with $k \le n$,
\begin{equation}
\label{finiteA.4}
T_{l+1}^{\star}(m) \rightarrow ( T_{l+1}^{\star}(n))^{T_{l+1}^{\star}(k)}.
\end{equation}

By the definition of $\mathcal{R}_{l+1}$, $\bf{A.4}$ for $\mathcal{R}_{l+1}$ is equivalent to the following: for all $k<\omega$, $T_{l+1}^{\star} \rightarrow (T^{\star}_{l+1})^{T_{l+1}^{\star}(k)}.$ To show that this partition relation holds, let $k$ be a positive integer and $\{\Pi_{0}, \Pi_{1}\}$ be a partition of ${T_{l+1}^{\star}\choose T_{l+1}^{\star}(k)}$. Since $n$ and $k$ where arbitrary in the proof of $(\ref{finiteA.4})$, there exists a strictly increasing sequence $(m_{n})_{n<\omega}$ such that for each $n<\omega$, $T^{\star}_{l+1}(m_{n}) \rightarrow (T^{\star}_{l+1}(n))^{T_{l+1}^{\star}(k)}$. Let $(S_{0}, S_{1}, \dots)$ be a sequence of trees such that for each $n<\omega$, $S_{n} \in { T_{l+1}^{\star}(m_{n}) \choose T_{l+1}^{\star}(i)}$ and ${S_{n} \choose T_{l+1}^{\star}(k)}$ is contained in one piece of the partition $\{\Pi_{0}, \Pi_{1}\}$. By the pigeonhole principle, there exists $j<2$ and a strictly increasing sequence $(n_{0}, n_{1}, \dots)$  such that for all $i<\omega$, $S_{n_{i}} \in { T_{l+1}^{\star}(m_{n_{i}})\choose T_{l+1}^{\star}(n_{i})}$ and ${S_{n_{i}} \choose T_{l+1}^{\star}(k)}\subseteq \Pi_{j}$. If $S = \bigcup_{i<\omega} S_{n_{i}}$ and $S'$ is any element of ${S \choose T_{l+1}^{\star}}$ then ${ S' \choose T_{l+1}^{\star}(k)} \subseteq \Pi_{j}$. Therefore for each positive integer $k$, 
$T_{l+1}^{\star} \rightarrow (T^{\star}_{l+1})^{T_{l+1}^{\star}(k)}$. holds. Equivalently, $\bf{A.4}$ holds for $\mathcal{R}_{l+1}$.
\end{proof}

\begin{theorem}\label{RamseySpaces} For each positive integer $n$, $(\mathcal{R}^{\star}_{n}, \le, r)$ satisfies $\bf{A.1}$-$\bf{A.4}$ and forms a topological Ramsey space.
\end{theorem}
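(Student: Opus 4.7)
The plan is to proceed by induction on $n$. The base case $n=1$ is exactly Theorem \ref{RamseySpace}, which has already been established. For the inductive step, assume that $(\mathcal{R}^{\star}_{n}, \le, r)$ satisfies $\bf{A.1}$--$\bf{A.4}$ and forms a topological Ramsey space. The goal is to show that $(\mathcal{R}^{\star}_{n+1}, \le, r)$ does as well, and then invoke the abstract Ellentuck theorem.

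For axioms $\bf{A.1}$--$\bf{A.3}$ and the requirement that $\mathcal{R}^{\star}_{n+1}$ is a closed subspace of $(\mathcal{AR}(T^{\star}_{n+1}))^{\omega}$, I would observe that the tree $T^{\star}_{n+1}$ has exactly the same branching structure above the root as $T_{n+1}$ (since each $T^{\star}_{n+1}(i)$ is built from a copy of $T^{\star}_{n}(j)$ glued below $\langle i\rangle$ in precisely the same pattern by which $T_{n+1}(i)$ glues $T_{n}(j)$ below $\langle i\rangle$). Consequently, the verification of $\bf{A.1}$--$\bf{A.3}$ and closedness for $\mathcal{R}_{n+1}$ given in \cite{Ramsey-Class2} transfers to $\mathcal{R}^{\star}_{n+1}$ after the same kind of trivial symbol substitution that was used in the proof of Theorem \ref{RamseySpace}. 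I would just indicate this and omit the line-by-line verification.

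The only nontrivial axiom is $\bf{A.4}$, but this is exactly where Lemma \ref{induction} does the work: the lemma asserts that whenever $\mathcal{R}^{\star}_{l}$ is a topological Ramsey space, $\mathcal{R}^{\star}_{l+1}$ satisfies $\bf{A.4}$. Applying this with $l=n$, using the inductive hypothesis that $\mathcal{R}^{\star}_{n}$ is a topological Ramsey space, yields $\bf{A.4}$ for $\mathcal{R}^{\star}_{n+1}$. With all four axioms and closedness in hand, the abstract Ellentuck theorem gives that $(\mathcal{R}^{\star}_{n+1}, \le, r)$ is a topological Ramsey space, completing the induction.

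The main ``obstacle'' has in fact already been overcome in Lemma \ref{induction}, where the Ramsey-theoretic pigeonhole principle for one level of $T^{\star}_{l+1}$ is bootstrapped from the full Ramsey property of $\mathcal{R}^{\star}_{l}$ via Mijares' finite Ramsey theorem and a standard fusion argument. Thus the present theorem is essentially a formal induction that packages the base case (Theorem \ref{RamseySpace}) together with the inductive step (Lemma \ref{induction}), modulo routine verification of $\bf{A.1}$--$\bf{A.3}$ and closedness by transfer from \cite{Ramsey-Class2}.
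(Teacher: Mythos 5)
Your proposal is correct and follows essentially the same route as the paper: induction on $n$ with the base case given by Theorem \ref{RamseySpace}, axioms $\bf{A.1}$--$\bf{A.3}$ and closedness transferred from \cite{Ramsey-Class2} by routine modification, and $\bf{A.4}$ supplied by Lemma \ref{induction} together with the inductive hypothesis, before invoking the abstract Ellentuck theorem. (If anything, your citation of Theorem \ref{RamseySpace} for the base case is cleaner than the paper's, which points to Lemma \ref{BaseCase} there.)
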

\begin{proof}[Proof by induction on $n$] The base case when $n=1$ follows from Theorem $\ref{BaseCase}$.  Suppose that $\mathcal{R}^{\star}_{n}$ satisfies $\bf{A.1}$-$\bf{A.4}$ and forms a topological Ramsey space. By the abstract Ellentuck theorem it is enough to show that $(\mathcal{R}^{\star}_{n+1}, \le, r)$ satisfies $\bf{A.1}$-$\bf{A.4}$ and forms a closed subspace of $(\mathcal{AR}_{n+1})^{\omega}$. The proof that $\mathcal{R}^{\star}_{n+1}$ is a closed subspace of $(\mathcal{AR}_{n+1})^{\omega}$ and satisfies axioms $\bf{A.1}$-$\bf{A.3}$ follows by trivial modifications to the proofs of the same facts for the space $\mathcal{R}_{n+1}$ in \cite{Ramsey-Class2}. For this reason, we omit the proof that $\mathcal{R}^{\star}_{n+1}$ forms a closed subspace of $(\mathcal{AR}_{n+1})^{\omega}$ and satisfies axioms $\bf{A.1}$-$\bf{A.3}$.  The induction hypothesis and Lemma $\ref{induction}$ show that $\bf{A.4}$ holds for $\mathcal{R}_{n+1}^{\star}$.   By the abstract Ellentuck theorem, $\mathcal{R}^{\star}_{n+1}$ forms a topological Ramsey space.
\end{proof}

Next, for each positive integer $n$, we define maps $\gamma_{n}$ and $\Gamma_{n}$ that will be used to transfer an ultrafilter on $[T_{n}^{\star}]$ generated by a subset of $\mathcal{R}^{\star}_{n}$ to an ultrafilter on $[T_{n}]$ generated by a subset of $\mathcal{R}_{n}$.

\begin{definition}
Let $\{t_{0},t_{1},t_{2}, \dots\}$ and $\{s_{0},s_{1},s_{2}, \dots\}$ be the lexicographically increasing enumeration of $[T_{n}]$ and $[T_{n}^{\star}]$, respectively. Let $\gamma_{n}:[T_{n}^{\star}]\rightarrow [T_{n}]$ such that for all $i<\omega$,
\begin{equation}
 \gamma_{n}(s_{i})=t_{i}.
\end{equation}
 Let $\Gamma_{n}: \mathcal{R}_{n}^{\star}\rightarrow \mathcal{R}_{n}$ be the map given by 
\begin{equation}
\Gamma_{n}(S) = cl( \gamma_{n}''[S]).
\end{equation}
\end{definition}
\begin{remark} Let $n$ be a positive integer. $\gamma_{n}$ is bijective and $\Gamma_{n}$ is injective but not surjective. 
\end{remark}

\begin{theorem}
\label{ultrafilter} Let $n$ be a positive integer. $(\mathcal{R}_{n}^{\star}, \le^{*})$ is $\sigma$-closed, and if $\mathcal{G}$ is a generic filter for $(\mathcal{R}_{n}^{\star}, \le^{*})$ over some ground model $V$, then $\Gamma_{n}''\mathcal{G}$ generates an ultrafilter on $[T_{n}]$ that is selective for $\mathcal{R}_{n}$ but not Ramsey for $\mathcal{R}_{n}$ in $V[\mathcal{G}]$.
\end{theorem}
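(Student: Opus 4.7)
The plan is to follow, nearly verbatim, the structure of the proof of Theorem \ref{ultrafilter} in the $n=1$ case, now supplied for each $n$ by Theorem \ref{RamseySpaces} (which promotes $\mathcal{R}_n^{\star}$ to a topological Ramsey space). There are four separate assertions to verify: that $(\mathcal{R}_n^{\star},\le^{*})$ is $\sigma$-closed; that $\Gamma_n''\mathcal{G}$ generates an ultrafilter $\mathcal{V}_n$ on $[T_n]$ in $V[\mathcal{G}]$; that $\mathcal{V}_n$ is selective for $\mathcal{R}_n$; and that $\mathcal{V}_n$ is not Ramsey for $\mathcal{R}_n$.

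For $\sigma$-closedness I would repeat the argument of Lemma \ref{sigma-closed}: given a $\le^{*}$-decreasing sequence $(S_i)_{i<\omega}$ in $\mathcal{R}_n^{\star}$, extract an increasing $(k_i)_{i<\omega}$ so that $S_{i+1}\setminus r_{k_i}(S_{i+1})\subseteq S_j$ for every $j<i$, pick $S(i)\in{S_{i+1}\setminus r(k_i,S_{i+1})\choose T_n^{\star}(i)}$, and set $S=\bigcup_i S(i)$. For ultrafilter generation, each $X\subseteq [T_n]$ lies in the ground model by $\sigma$-closedness; using the natural bijection between $[T_n]$ and ${T_n^{\star}\choose T_n^{\star}(0)}$ together with axiom $\bf{A.4}$ for $\mathcal{R}_n^{\star}$, the set $\Delta_X=\{S\in\mathcal{R}_n^{\star}:[\Gamma_n(S)]\subseteq X\text{ or }[\Gamma_n(S)]\cap X=\emptyset\}$ is dense, and meeting it along $\mathcal{G}$ produces $\mathcal{V}_n$.

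For selectivity, fix a sequence $\Gamma_n(S_0)\ge\Gamma_n(S_1)\ge\cdots$ drawn from $\Gamma_n''\mathcal{G}$ and define $\Delta_{(S_0,S_1,\dots)}$ exactly as in the proof of Theorem \ref{ultrafilter}. Density is checked by the same dichotomy: for any $T\in\mathcal{R}_n^{\star}$, either the Ramsey property of $\mathcal{R}_n^{\star}$ produces some $S'\le T$ disjoint from some $S_i$, or it yields an internal sequence $(S_i')\subseteq [\emptyset,T]$ with $S_i'\le S_i$, at which point the $\sigma$-closure diagonalization above delivers $S\le T$ with $S\setminus r_i(S)\subseteq S_i$ for every $i$. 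Genericity of $\mathcal{G}$ transfers this property through $\Gamma_n$ to $\mathcal{V}_n$.

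The hard part is non-Ramseyness: producing a partition of some ${T_n\choose U}$ that is never one-sided on any $\Gamma_n(S_0)$. My plan is to exploit the single extra leaf level that $T_n^{\star}$ has above $T_n$: two maximal nodes of $T_n$ sharing a length-$n$ initial segment pull back via $\gamma_n^{-1}$ to maximal nodes of $T_n^{\star}$ sharing a length-$n$ initial segment that may or may not extend to length $n+1$. I would partition ${T_n\choose r_2(T_n)}$ by whether the two lexicographically smallest elements of $[s\setminus r_1(s)]$ have $\gamma_n^{-1}$-preimages agreeing on their $(n+1)$-st coordinate, directly generalizing the partition used in the proof of Theorem \ref{ultrafilter}. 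The combinatorial crux, which is the main obstacle, is to verify that for every $S_0\in\mathcal{R}_n^{\star}$ the family ${\Gamma_n(S_0)\choose r_2(T_n)}$ meets both classes; since $T_n^{\star}$ is built iteratively from $T_1^{\star}$, the deepest leaf clusters of $T_n^{\star}$ look like embedded copies of $T_1^{\star}$-blocks and the required richness reduces, by an induction on $n$, to the pair-type computation already implicit at the base case $n=1$ in Theorem \ref{ultrafilter}.
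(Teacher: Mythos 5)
Your proposal follows the paper's own proof essentially verbatim: the $\sigma$-closure, ultrafilter-generation, and selectivity arguments are imported from the $n=1$ case exactly as the paper does (citing Theorem \ref{RamseySpaces} in place of Theorem \ref{RamseySpace}), and your partition of ${T_n \choose r_2(T_n)}$ by agreement of the $\gamma_n^{-1}$-preimages at the $(n+1)$-st coordinate is the same dichotomy the paper states as the length of the longest common initial segment being $n$ versus $n-1$. The ``combinatorial crux'' you flag --- that ${\Gamma_n(S)\choose r_2(T_n)}$ meets both classes for every $S$ --- is likewise asserted without further detail in the paper, so your sketch matches its level of rigor and its approach.
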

\begin{proof}
The proof that $(\mathcal{R}_{n}^{\star}, \le^{*})$ is $\sigma$-closed, and if $\mathcal{G}$ is a generic filter for $(\mathcal{R}_{n}^{\star}, \le^{*})$ over some ground model $V$, then $\Gamma_{n}''\mathcal{G}$ generates an ultrafilter on $[T_{n}]$ that is selective for $\mathcal{R}_{n}$ is completely analogous to the the proof for $\mathcal{R}_{1}$ given in the previous section. The only difference in the argument is an application of Theorem $\ref{RamseySpaces}$ instead of Theorem $\ref{RamseySpace}$. So we omit the proof and let $\mathcal{V}_{n}$ denote the selective for $\mathcal{R}_{n}$ ultrafilter generated by $\Gamma_{n}''\mathcal{G}$. 

 Next we construct a partition of ${ T_{n} \choose r_{2}(T_{n})}$ and show that the partition witnesses that $\mathcal{V}_{n}$ is not Ramsey for $\mathcal{R}_{n}$. For each $s \in{ T_{n} \choose r_{2}(T_{1})}$, we let $s'$ and $s''$ denote the two lexicographically smallest elements of $[s\setminus  r_{1}(s)]$. Notice that for each $s\in{ T_{n} \choose r_{2}(T_{n})}$ the length of the longest common initial segment of $\gamma^{-1}(s')$ and $\gamma^{-1}(s'')$ is either $n$ or $n-1$. For each $j<2$, let $\Pi_{j}$ denote the set of all $s \in{ T_{n} \choose r_{2}(T_{n})}$ such that length of the longest common initial segment of $\gamma^{-1}(s')$ and $\gamma^{-1}(s'')$ is $n-j$. For each $S\in \Gamma_{n}''\mathcal{G}$, ${ S\choose r_{2}(T_{n})}$ is neither a subset of $\Pi_{0}$ nor $\Pi_{1}$. Since $\mathcal{V}_{n}$ is generated by $\Gamma_{n}''\mathcal{G}$ it is not a Ramsey for $\mathcal{R}_{n}$ ultrafilter on $[T_{n}]$.
\end{proof}

\section{Selective but not Ramsey for $\bigotimes_{i=0}^{n} \mathcal{R}(S_{i})$} 
\label{section6}
In Sections $\ref{section4}$ and $\ref{section5}$ we only considered trees on $\omega$. In this section, in order to introduce the product of two spaces of the form $\mathcal{R}(T)$ and $\mathcal{R}(S)$ we must consider trees on $\omega^{2}$. Dobrinen, Mijares and Trujillo in \cite{GenRamsey-Class} have introduced a notion of product among special types of topological Ramsey spaces. Included among these special spaces are $\mathcal{R}(T_{i})$ and $\mathcal{R}^{\star}(T_{i})$ for $i<\omega$. In fact, for such spaces the product of $\mathcal{R}(S)$ and $\mathcal{R}(T)$ is defined by introducing the tree $S \otimes T$ on $\omega^{2}$ and letting $\mathcal{R}(S)\otimes\mathcal{R}(T)$ denote the triple $(\mathcal{R}(S \otimes T), \le r)$.

The simplest possible non-trivial product space is $\mathcal{R}_{1}\otimes\mathcal{R}_{1}$ which we denote by $\mathcal{H}^{2}$. It was first considered by Dobrinen, Mijares and Trujillo in \cite{GenRamsey-Class}.  The construction in \cite{GenRamsey-Class} was inspired by the work of Blass in \cite{BlassPpoints} which uses forcing to adjoin a p-point ultrafilter having two Rudin-Keisler incomparable predecessors and subsequent work of Dobrinen and Todorcebic in \cite{DobrinenTukey} which shows that the same forcing adjoins a p-point ultrafilter with two Tukey-incomparable p-point Tukey-predecessors. In \cite{GenRamsey-Class}, it is shown that forcing with $\mathcal{H}^{2}$ using almost-reduction adjoins an ultrafilter whose Rudin-Keisler predecessors form a four-element Boolean algebra. Before giving the general construction of the finite product we give the precise definition of the prototype example $\mathcal{H}^{2}$.

\begin{definition}[$(\mathcal{H}^{2}, \le, r)$]
Let
\begin{equation}
T_{1} \otimes T_{1} = \bigcup_{i<\omega} cl( \{ (s_{j}, t_{j})_{j<|s|} \in [\omega^{2}]^{<\omega} : s,t \in [T_{1}(i)]\}).
\end{equation}
We let $(\mathcal{H}^{2}, \le ,r)$ denote the space $(\mathcal{R}(T_{1}\otimes T_{1}), \le ,r)$.
\end{definition}
\begin{remark}
By the previous definition, for each $i<\omega$, $T_{1}\otimes T_{1}(i)= cl(\{ \left < (i,i), (j,k)\right>: j,k\le i\})$ and $T_{1}\otimes T_{1}= \bigcup T_{1}\otimes T_{1}(i)$. The elements of $\mathcal{H}^{2}$ are subtrees of $T_{1}\otimes T_{1}$ that are isomorphic to $T_{1}\otimes T_{1}$.
\end{remark}

The main theorem of this section implies that forcing with the similarly defined product $\mathcal{R}_{1}^{\star} \otimes\mathcal{R}_{1}^{\star}$ using almost-reduction adjoins an ultrafilter on $[T_{1}\otimes T_{1}]$ that is selective but not Ramsey for $\mathcal{H}^{2}$. In order to define the general finite product, we introduce a notion of finite product among the trees $\{T_{i}, T_{i}^{\star}, i<\omega\}$. If $S_{0} \otimes \cdots \otimes S_{k-1}$ is a finite product of $k$ such trees then $S_{0} \otimes \cdots \otimes S_{k-1}$ forms a tree on $\omega^{k}$. For example, $T_{1} \otimes T_{1}$ is a tree on $\omega^{2}$. 

\begin{definition}Suppose $k$ and $k'$ are positive integers and $s$ and $t$ are finite sequences of $k$-tuples and $k'$-tuples, respectively, such that $|s|=|t|$. We let $(s,t)$ denote the sequence on $\omega^{k+k'}$ givne by $(s_{i}, t_{i})_{i<\omega}$.  
\end{definition}
For example if $s=\left<1,2\right>$ and $t=\left <3,4 \right>$ then $(s,t)$ denotes the sequence $\left< (1,3), (2,4)\right>$ on $\omega^{2}$. Before giving the definition of the product of two triples we introduce the product of two trees.

\begin{definition}
Let $S$ and $T$ be trees on $\omega^{k}$ and $\omega^{k'}$, respectively. Assume that for all $s,t\in[S]\cup[T]$, $|s|=|t|$, $\pi_{0}''[S]=\{\left < (n,\dots,n)\right>\in \omega^{k} : n<\omega\}$ and $\pi_{0}''[T]=\{\left < (n,\dots,n)\right>\in \omega^{k'} : n<\omega\}$. We let
\begin{equation}
S \otimes T = \bigcup_{i<\omega} cl(\{ (s,t) \in (\omega^{(k+k')})^{<\omega} : s \in [S(i)] \ \& \ t \in[T(i)] \}).
\end{equation}
\end{definition}

\begin{remark} For positive integer $n$ and each $m> n$, there exists $s,t\in[S]\cup[T]$ such that $|s|\not=|t|$. Therefore the product $T_{n} \otimes T_{m}$ is not well-defined. For each $m> n$, the elements of $[T_{n}]$ can be extended to length $m$ sequences by repeating the last element of a given sequence for the final $(m-n)$-elements of the extended sequence. (For example, the length 2 sequence $(2,3)$ would extend to the length 4 sequence $(2,3,3,3).$) The space constructed using the extended tree is isomorphic to the original space. In this way the product $T_{n} \otimes T_{m}$ is well-defined.
\end{remark}

\begin{definition}
Suppose that $\left < S_{i} : i\le n \right>$ is a finite sequence of trees where each $S_{i}$ is one of the trees $T_{j}$ for some $j<\omega$. Without loss of generality we may extend all of the trees so that for each $s,t\in \bigcup_{i\le n} [S_{i}]$, $|s|=|t|$. If $n=1$ then we let 
$
\bigotimes_{i=0}^{1}S_{i} = S_{0} \otimes S_{1}.
$ If $n>1$ then we recursively define the product by letting,
$
 \bigotimes_{i=0}^{n} S_{i} = S_{n} \otimes \bigotimes_{i=0}^{n-1} S_{i}.
$ For each sequence let,
$
\bigotimes_{i=0}^{n} \mathcal{R}(S_{i}) = \mathcal{R}( \bigotimes_{i=0}^{n} S_{i})
$
and
$
\bigotimes_{i=0}^{n} \mathcal{R}^{\star}(S_{i}) = \mathcal{R}( \bigotimes_{i=0}^{n} S^{\star}_{i}).
$
\end{definition}

The next theorem follows from a more general theorem in \cite{GenRamsey-Class} about products of sequences of structures in a relational language. 
\begin{theorem}[\cite{GenRamsey-Class}] \label{ProductRamseySpace} If $\left < S_{i} : i\le n \right>$ is a finite sequence of trees where each $S_{i}$ is one of the trees $T_{j}$ or $T_{j}^{\star}$ for some $j<\omega$, then $(\bigotimes_{i=0}^{n} \mathcal{R}(S_{i}) , \le , r)$ satisfies $\bf{A.1}$-$\bf{A.4}$ and forms a topological Ramsey space.
\end{theorem}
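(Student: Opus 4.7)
The plan is to proceed by induction on $n$, the length of the product sequence. The base case $n=0$, corresponding to a single tree $S_{0}$, is already established: if $S_{0}=T_{j}$ then $\mathcal{R}(S_{0})$ is a topological Ramsey space by the theorem of Dobrinen and Todorcevic cited in Section~\ref{section5}, and if $S_{0}=T_{j}^{\star}$ then $\mathcal{R}(S_{0})$ is a topological Ramsey space by Theorem~\ref{RamseySpaces}. For the inductive step, assume that $\bigotimes_{i<n}\mathcal{R}(S_{i})$ satisfies \textbf{A.1}--\textbf{A.4}; then, using the recursive definition $\bigotimes_{i=0}^{n}S_{i}=S_{n}\otimes\bigotimes_{i=0}^{n-1}S_{i}$, I want to deduce the same for $\bigotimes_{i=0}^{n}\mathcal{R}(S_{i})=\mathcal{R}\!\bigl(\bigotimes_{i=0}^{n}S_{i}\bigr)$. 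Closedness in $(\mathcal{AR})^{\omega}$ and axioms \textbf{A.1}--\textbf{A.3} reduce to coordinate-wise checks, since $\le$ and the $r_{k}$ on a product tree are computed level-by-level from those on the factors; these verifications are routine modifications of the single-tree arguments given in \cite{Ramsey-Class} and \cite{Ramsey-Class2}, so I would only sketch them.

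The main obstacle, as in the single-tree case, is axiom \textbf{A.4}, which for product trees of the shape defined here is equivalent to the infinite pigeonhole
\[
\bigotimes_{i=0}^{n}S_{i}\rightarrow\Bigl(\bigotimes_{i=0}^{n}S_{i}\Bigr)^{(\bigotimes_{i=0}^{n}S_{i})(k)}
\]
for every positive integer $k$. Following the template of Lemmas~\ref{BaseCase} and \ref{pigeonhole}, I would first prove the finite version
\[
\Bigl(\bigotimes_{i=0}^{n}S_{i}\Bigr)(m)\rightarrow\Bigl(\bigotimes_{i=0}^{n}S_{i}\Bigr)(\ell)^{(\bigotimes_{i=0}^{n}S_{i})(k)}
\]
for $m$ sufficiently large depending on $\ell$ and $k$. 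The key observation is that the $m$-th level $(\bigotimes_{i=0}^{n}S_{i})(m)$ is, by construction, the full product of $(\bigotimes_{i<n}S_{i})(m)$ with $S_{n}(m)$, so every isomorphic copy of $(\bigotimes_{i=0}^{n}S_{i})(k)$ inside it is determined by picking compatible isomorphic copies in each factor. Applying Theorem~3.5 of \cite{MijaresGalvin} (i.e.\ the finite Galvin-type Ramsey theorem) to $\mathcal{R}(S_{n})$ and, by induction, to $\bigotimes_{i<n}\mathcal{R}(S_{i})$ supplies a finite Ramsey property at every level for each factor separately. A standard product Ramsey argument — stabilize the partition first on the $S_{n}$-coordinate by the single-tree finite Ramsey theorem, then on the remaining coordinates by the inductive finite Ramsey theorem — then delivers the displayed finite partition relation.

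With the finite pigeonhole in hand, the passage to the infinite partition relation is identical to the diagonalization carried out in Lemma~\ref{pigeonhole}: choose a strictly increasing sequence $(m_{\ell})$ such that each $(\bigotimes_{i=0}^{n}S_{i})(m_{\ell})$ arrows $(\bigotimes_{i=0}^{n}S_{i})(\ell)$ over copies of $(\bigotimes_{i=0}^{n}S_{i})(k)$, extract monochromatic $\ell$-th level copies $S_{\ell}'$, apply the infinite pigeonhole to a fixed color, and take the union $\bigcup_{\ell}S_{\ell}'$, which lies in $\bigotimes_{i=0}^{n}\mathcal{R}(S_{i})$ and monochromatically realizes all copies of $(\bigotimes_{i=0}^{n}S_{i})(k)$. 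The abstract Ellentuck theorem then concludes that $\bigotimes_{i=0}^{n}\mathcal{R}(S_{i})$ is a topological Ramsey space. The only genuinely new ingredient beyond the single-tree arguments already in the paper is thus the product Ramsey step used to obtain the finite pigeonhole; everything else is bookkeeping or a direct adaptation of Section~\ref{section4} and Section~\ref{section5}.
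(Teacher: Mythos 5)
The paper does not actually prove this theorem: it is imported from \cite{GenRamsey-Class}, where it falls out of a more general theorem about finite products of sequences of structures in a relational language. Your direct induction on the number of factors is therefore a genuinely different presentation, and in substance it unpacks, for these particular trees, the same product Ramsey argument that drives the general theorem. The reduction of \textbf{A.1}--\textbf{A.3} and closedness to coordinate-wise checks, the identification of \textbf{A.4} with the infinite block pigeonhole, the finite pigeonhole obtained by stabilizing one coordinate at a time, and the diagonalization as in Lemma~\ref{pigeonhole} are all the right moves. Note only that the two-step stabilization needs the finite Ramsey theorem for the factor being stabilized with more than two colours (one colours a copy in the $S_{n}$-coordinate by the induced colouring \emph{function} on the finitely many copies in the remaining coordinates); this is still finite, so Theorem~3.5 of \cite{MijaresGalvin} (or iterating the two-colour version) supplies it.

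The one step you assert without argument, and which is the genuinely delicate point, is the ``key observation'' that every isomorphic copy of $(\bigotimes_{i=0}^{n}S_{i})(k)$ inside $(\bigotimes_{i=0}^{n}S_{i})(m)$ splits as a product of compatible copies in the individual factors. Under the paper's literal definition of isomorphism (a bijection preserving the lexicographic order built from the product partial order on $\omega^{k}$) this is false: already in $T_{1}\otimes T_{1}(2)$ the closure of the four maximal nodes $\left<(2,2),(3,3)\right>$, $\left<(2,2),(3,4)\right>$, $\left<(2,2),(4,3)\right>$, $\left<(2,2),(4,5)\right>$ is lex-order-isomorphic to $T_{1}\otimes T_{1}(1)$ (both posets of maximal nodes are diamonds) but is not a grid $J\times K$. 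In \cite{GenRamsey-Class} the issue does not arise because copies are, by definition, product substructures of relational structures; in the tree formulation you must either take ``copy'' to mean grid (the intended reading, and the one the partition in Theorem~\ref{ultrafilterProd} implicitly relies on) or strengthen the isomorphism notion so that the factorization holds. With that repaired, your argument goes through and is essentially the specialization of the cited general proof.
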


\begin{definition} Suppose that $\left < S_{i} : i\le n \right>$ is a finite sequence of trees where each $S_{i}$ is one of the trees $T_{j}$ for some $j<\omega$. Let $\{t_{0},t_{1},t_{2}, \dots\}$ and $\{s_{0},s_{1},s_{2}, \dots\}$ be a lexicographically non-decreasing enumeration of $ [\bigotimes_{i=0}^{n} S_{i}]$ and $[\bigotimes_{i=0}^{n} S^{\star}_{i}]$, respectively. Let $\boldsymbol{\gamma}:[\bigotimes_{i=0}^{n} S_{i}]\rightarrow [\bigotimes_{i=0}^{n} S^{\star}_{i}]$ such that for all $i<\omega$,
\begin{equation}
 \boldsymbol{\gamma}(s_{i})=t_{i}.
\end{equation}
 Let $\boldsymbol{\Gamma}: \bigotimes_{i=0}^{n} \mathcal{R}^{\star}(S_{i})\rightarrow \bigotimes_{i=0}^{n} \mathcal{R}(S_{i})$ be the map given by 
\begin{equation}
\boldsymbol{\Gamma}(S) = cl( \boldsymbol{\gamma}''[S]).
\end{equation}
\end{definition}
\begin{remark} For each $\left < S_{i} : i\le n \right>$  sequence, $\boldsymbol{\gamma}$ is bijective and $\boldsymbol{\Gamma}$ is injective but not surjective.
\end{remark}

\begin{theorem}
\label{ultrafilterProd} Suppose that $\left < S_{i} : i\le n \right>$ is a finite sequence of trees where each $S_{i}$ is one of the trees $T_{j}$ for some $j<\omega$. $(\bigotimes_{i=0}^{n} \mathcal{R}^{\star}(S_{i}), \le^{*})$ is $\sigma$-closed, and if $\mathcal{G}$ is a generic filter for $(\bigotimes_{i=0}^{n} \mathcal{R}^{\star}(S_{i}), \le^{*})$ over some ground model $V$, then $\boldsymbol{\Gamma}''\mathcal{G}$ generates an ultrafilter on $[\bigotimes_{i=0}^{n} S_{i}]$ that is selective for $\bigotimes_{i=0}^{n} \mathcal{R}(S_{i})$ but not Ramsey for $\bigotimes_{i=0}^{n} \mathcal{R}(S_{i})$ in $V[\mathcal{G}]$.
\end{theorem}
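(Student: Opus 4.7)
The plan is to follow the structure of the proof of Theorem \ref{ultrafilter} in Section \ref{section5} and adapt each step to the product setting. Since Theorem \ref{ProductRamseySpace} establishes that $(\bigotimes_{i=0}^{n} \mathcal{R}^{\star}(S_{i}), \le, r)$ is a topological Ramsey space, the abstract Ellentuck theorem and the standard fusion consequences of axioms $\mathbf{A.1}$--$\mathbf{A.4}$ are available for this product.

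First I would prove the product analog of Lemma \ref{sigma-closed}: given a $\le^{*}$-decreasing sequence $(T_{j})_{j<\omega}$ in $\bigotimes_{i=0}^{n} \mathcal{R}^{\star}(S_{i})$, pick a strictly increasing sequence $(k_{j})_{j<\omega}$ witnessing the almost-reductions, inductively select blocks $T(j) \in {T_{j+1}\setminus r_{k_{j}}(T_{j+1}) \choose \bigotimes_{i=0}^{n} S^{\star}_{i}(j)}$, and fuse to $T := \bigcup_{j<\omega} T(j)$. This yields $\sigma$-closedness of $(\bigotimes_{i=0}^{n} \mathcal{R}^{\star}(S_{i}),\le^{*})$, and consequently every combinatorial dense set we need in $V[\mathcal{G}]$ already lives in $V$.

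For the ultrafilter and selectivity parts, the arguments from Theorem \ref{ultrafilter} transfer verbatim after substituting $\boldsymbol{\Gamma}, \boldsymbol{\gamma}$ for $\Gamma_{n}, \gamma_{n}$ and invoking $\mathbf{A.4}$ for the product. The set $\Delta_{X} = \{S : [\boldsymbol{\Gamma}(S)] \subseteq X \text{ or } [\boldsymbol{\Gamma}(S)] \cap X = \emptyset\}$ is dense in $(\bigotimes_{i=0}^{n} \mathcal{R}^{\star}(S_{i}), \le^{*})$ by pulling an $\mathbf{A.4}$-homogeneous refinement through $\boldsymbol{\Gamma}$, so $\boldsymbol{\Gamma}''\mathcal{G}$ generates an ultrafilter $\mathcal{V}$ on $[\bigotimes_{i=0}^{n} S_{i}]$. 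For selectivity, given a $\supseteq$-decreasing sequence from $\boldsymbol{\Gamma}''\mathcal{G}$, the corresponding $\Delta_{(S_{0}, S_{1}, \dots)}$ lies in $V$ by $\sigma$-closedness and is dense by the same dichotomy: either some $S_{j}$ becomes almost disjoint from a refinement of the given condition, or one diagonalizes along the sequence using the product fusion above.

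The main obstacle is to display a partition of ${\bigotimes_{i=0}^{n} S_{i} \choose r_{2}(\bigotimes_{i=0}^{n} S_{i})}$ witnessing that $\mathcal{V}$ is not Ramsey for $\bigotimes_{i=0}^{n} \mathcal{R}(S_{i})$. Since each $S_{i}=T_{j_{i}}$ is unstarred, every factor $S_{i}^{\star}$ has exactly one more level than $S_{i}$, and this extra level is inherited by the product: the maximal nodes of $\bigotimes_{i=0}^{n} S_{i}^{\star}$ are strictly longer than those of $\bigotimes_{i=0}^{n} S_{i}$, while $\boldsymbol{\gamma}$ is only a bijection on sets of maximal nodes and does not preserve tree structure. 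Mimicking the non-Ramsey partitions from Sections \ref{section4} and \ref{section5}, for $s \in {\bigotimes_{i=0}^{n} S_{i} \choose r_{2}(\bigotimes_{i=0}^{n} S_{i})}$ let $s', s''$ be the two lexicographically smallest members of $[s \setminus r_{1}(s)]$ and let $\ell(s)$ be the length of the longest common initial segment of $\boldsymbol{\gamma}^{-1}(s')$ and $\boldsymbol{\gamma}^{-1}(s'')$ inside $\bigotimes_{i=0}^{n} S_{i}^{\star}$. A case analysis analogous to the single-factor computations shows that $\ell(s)$ takes exactly two possible values, both realized within every $\boldsymbol{\Gamma}(S)$; partitioning by $\ell(s)$ therefore makes no member of $\boldsymbol{\Gamma}''\mathcal{G}$ homogeneous, establishing that $\mathcal{V}$ is not Ramsey for $\bigotimes_{i=0}^{n} \mathcal{R}(S_{i})$.
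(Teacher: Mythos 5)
Your treatment of $\sigma$-closedness, the ultrafilter property, and selectivity matches the paper's proof, which likewise just transports the $\mathcal{R}_{1}$ argument of Section \ref{section4} word for word, citing Theorem \ref{ProductRamseySpace} in place of Theorem \ref{RamseySpace}; the fusion argument you sketch is exactly the product analogue of Lemma \ref{sigma-closed}. The problem is in the non-Ramsey partition, which is the only step where the product case genuinely differs from the single-factor case, and it is precisely the step where your proposal has a gap. You take $s'$ and $s''$ to be ``the two lexicographically smallest members of $[s\setminus r_{1}(s)]$,'' but the lexicographic order the paper defines on $(\omega^{k})^{<\omega}$ uses the \emph{product order} on $\omega^{k}$, which is only a partial order for $k\ge 2$; in $[T_{1}\otimes T_{1}(1)]$, for instance, the nodes ending in $(1,2)$ and $(2,1)$ are incomparable, so ``the two smallest'' is not well defined (note the paper is careful to speak of ``\emph{a} lexicographically non-decreasing enumeration'' in the product setting).

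The paper's fix is the missing idea: it introduces the projection $\tau_{0}$ onto the first coordinate, chooses $s',s''$ to be any two elements of $[s\setminus r_{1}(s)]$ whose $\tau_{0}$-images are the two lexicographically smallest elements of $[\tau_{0}(s)\setminus r_{1}(\tau_{0}(s))]$ (a set of sequences over $\omega$, where lex order is total), and measures the longest common initial segment of the $\boldsymbol{\gamma}^{-1}$-preimages \emph{in the first factor only}. This reduces the dichotomy to the already-verified single-factor computation for $S_{0}=T_{j}$, yielding exactly the two values $j$ and $j-1$, both realized inside every $\boldsymbol{\Gamma}(S)$. Your invariant $\ell(s)$, by contrast, is the common-initial-segment length in the full product tree, which equals the minimum of the corresponding lengths over all factors; when the factors are trees $T_{j_{0}},\dots,T_{j_{n}}$ of different heights (extended by repetition), this minimum can take more than two values, and whether each value is realized in every $\boldsymbol{\Gamma}(S)$ depends on how the coordinates interact, which your ``case analysis analogous to the single-factor computations'' does not actually control. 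You need either to import the projection $\tau_{0}$ or to supply a genuinely new argument for the multi-coordinate invariant; as written the final step does not go through.
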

\begin{proof}
The proof that $(\bigotimes_{i=0}^{n} \mathcal{R}^{\star}(S_{i}), \le^{*})$ is $\sigma$-closed, and if $\mathcal{G}$ is a generic filter for $(\bigotimes_{i=0}^{n} \mathcal{R}^{\star}(S_{i}), \le^{*})$ over some ground model $V$, then $\boldsymbol{\Gamma}''\mathcal{G}$ generates an ultrafilter on $[\bigotimes_{i=0}^{n} S_{i}]$ that is selective for $\bigotimes_{i=0}^{n} \mathcal{R}(S_{i})$ is completely analogous to the the proof for $\mathcal{R}_{1}$ given in the  Section $\ref{section4}$. The only difference in the argument is an application of Theorem $\ref{ProductRamseySpace}$ instead of Theorem $\ref{RamseySpace}$. So we omit the proof and let $\boldsymbol{\mathcal{V}}$ denote the selective for $\bigotimes_{i=0}^{n} \mathcal{R}^{\star}(S_{i})$ ultrafilter generated by $\boldsymbol{\Gamma}''\mathcal{G}$.

Let $\tau_{0}$ be the map which takes a tree on $\omega^{n+1}$ to a tree on $\omega$ by sending each sequence of $(n+1)$-tuples to the sequence of first elements of the $(n+1)$-tuple. (For example, if $n=2$ and $S= cl(\{\left< (1,3), (2,4)\right>\})$ then $\tau_{0}(S)=cl(\{\left< 1,2\right>\})$.

 Next we construct a partition of ${ \bigotimes_{i=0}^{n} S_{i}, \choose r_{2}(\bigotimes_{i=0}^{n} S_{i})}$ and show that the partition witnesses that $\boldsymbol{\mathcal{V}}$ is not Ramsey for $\bigotimes_{i=0}^{n} \mathcal{R}^{\star}(S_{i})$. For each $s \in{ \bigotimes_{i=0}^{n} S_{i}, \choose r_{2}(\bigotimes_{i=0}^{n} S_{i})}$, we let $s'$ and $s''$ denote any two elements of $[s\setminus  r_{1}(s)]$ such that $\tau_{0}(s'), \tau_{0}(s'')$ are the two lexicographically smallest elements of $[\tau_{0}(s)\setminus  r_{1}(\tau_{0}(s))]$. Notice that for each $s \in{ \bigotimes_{i=0}^{n} S_{i}, \choose r_{2}(\bigotimes_{i=0}^{n} S_{i})}$ the length of the longest common initial segment of $\boldsymbol{\gamma}^{-1}(\tau_{0}(s'))$ and $\boldsymbol{\gamma}^{-1}(\tau_{0}(s''))$ is either $j$ or $j-1$ where $j$ is that natural number such that $S_{0}=T_{j}$. For each $k<2$, let $\Pi_{k}$ denote the set of all $s \in{ \bigotimes_{i=0}^{n} S_{i}, \choose r_{2}(\bigotimes_{i=0}^{n} S_{i})}$ such that length of the longest common initial segment of $\boldsymbol{\gamma}^{-1}(\tau_{0}(s'))$ and $\boldsymbol{\gamma}^{-1}(\tau_{0}(s''))$ is $j-k$. For each $S\in \boldsymbol{\Gamma}''\mathcal{G}$, ${ S \choose r_{2}(\bigotimes_{i=0}^{n} S_{i})}$ is neither a subset of $\Pi_{0}$ nor $\Pi_{1}$. Since $\boldsymbol{\mathcal{V}}$ is generated by $\boldsymbol{\Gamma}''\mathcal{G}$ it is not a Ramsey for $\bigotimes_{i=0}^{n} \mathcal{R}(S_{i})$ ultrafilter on $ [\bigotimes_{i=0}^{n} S_{i}]$. 
\end{proof}
\section{Conclusion}
\label{section7}
 For each positive integer $k$, we have presented countably many examples of trees on $\omega^{k}$ where forcing can be used to adjoin an ultrafilter on $[T]$ that is selective but not Ramsey for $\mathcal{R}(T)$. In each case, a new topological Ramsey space $\mathcal{R}^{\star}(T)$, a map $\Gamma: \mathcal{R}^{\star}(T) \rightarrow \mathcal{R}(T)$ and a partition $\{\Pi_{0}, \Pi_{1}\}$ of ${T\choose r_{2}(T)}$ where constructed in such a way that for all $S\in \Gamma''\mathcal{R}^{\star}(T)$, neither ${S\choose r_{2}(T)}\subseteq \Pi_{0}$ nor ${S\choose r_{2}(T)}\subseteq \Pi_{1}$. The main results follow by showing that if $\mathcal{G}$ is generic for $(\mathcal{R}^{\star}(T), \le^{*})$ then $\Gamma''\mathcal{G}$ generates a selective but not Ramsey for $\mathcal{R}(T)$ ultrafilter on $[T]$. In each case, the partition $\{\Pi_{0}, \Pi_{1}\}$ witnesses that the generated ultrafilter cannot be Ramsey for $\mathcal{R}(T)$.

 Dobrinen and Todorcevic in \cite{Ramsey-Class2} have also introduced the spaces $\mathcal{R}_{\alpha}$ where $\omega\le \alpha < \omega_{1}$. These spaces are constructed  from trees of infinite height in a slightly different manner than those considered in this article. In these cases it is possible to construct trees $T_{\alpha}$ and $T^{\star}_{\alpha}$, and a modified version of $\mathcal{R}^{\star}_{\alpha}$ for $\omega\le \alpha < \omega_{1}$. However, the partition given in the finite case can not be extended to the case for $\omega\le \alpha < \omega_{1}$ since the trees being used have infinite height. In particular the next question remains open.
\begin{question}
For $\alpha$ between $\omega$ and $\omega_{1}$, are the notions of selective for $\mathcal{R}_{\alpha}$ and Ramsey for $\mathcal{R}_{\alpha}$ equivalent?
\end{question}

For each positive integer $n$, let $\mathcal{H}^{n}$ denote the space $\otimes_{i=1}^{n} \mathcal{R}_{1}$. By Theorem $\ref{ultrafilterProd}$ forcing with $\otimes_{i=1}^{n} \mathcal{R}_{1}^{\star}$ using almost-reduction adjoins an ultrafilter that is selective but not Ramsey for $\mathcal{H}^{n}$. Dobrinen, Mijares and Trujillo in \cite{GenRamsey-Class} have also defined the topological Ramsey spaces $\mathcal{H}^{\alpha}$ for $\omega\le \alpha< \omega_{1}$. For similar reason to the $\mathcal{R}_{\alpha}$ case our methods fail to produce an ultrafilter that is selective but not Ramsey for $\mathcal{H}^{\alpha}$. Hence our next question also remains open.
\begin{question}
For $\alpha$ between $\omega$ and $\omega_{1}$, are the notions of selective for $\mathcal{H}^{\alpha}$ and Ramsey for $\mathcal{H}^{\alpha}$ equivalent?
\end{question}

All the spaces studied in this paper except the Ellentuck space either support ultrafilters which are selective but not Ramsey for the space or it is unknown if the notions of selective and Ramsey for the space are equivalent.  In fact the following question is still open.
\begin{question}
Is the Ellentuck space the only topological Ramsey space for which the notions of selective and Ramsey for the space are equivalent?
\end{question}

On a final note, we have also studied which properties of a tree $T$ on $\omega$ lead to triples $(\mathcal{R}(T),\le, r)$ that satisfy axioms $\bf{A.1}$-$\bf{A.4}$ and form a topological Ramsey space. In the case of finite trees on $\omega$, with a tedious proof, it is possible to show that for each such tree $T$ there is an $i<\omega$ such that $(\mathcal{R}(T),\le^{*})$ and $(\mathcal{R}(T_{i}), \le^{*})$ are densely bi-embeddable in one another. For example for each positive integer $n$, $(\mathcal{R}^{\star}(T_{n}),\le^{*})$ and $(\mathcal{R}(T_{n+1}), \le^{*})$ are densely bi-embeddable in one another.  Although these spaces are not necessarily identical they are similar enough that the methods of the paper work in this more general setting. We have omitted these proofs and results form this work in order to make the proofs easier to understand.

\bibliographystyle{model1-num-names}
\bibliography{Trujillo}

\end{document}